\newcommand{\A}{\mathcal A}
\newcommand{\M}{\mathbb M}
\newcommand{\R}{\mathbb R}
\newcommand{\F}{\mathbb F}
\newcommand{\Z}{\mathbb Z}
\newcommand{\Ext}{\mathrm{Ext}}
\newcommand{\C}{\mathbb C}
\newcommand{\toda}[1]{\langle #1\rangle}
\newcommand{\Spec}{\mathrm{Spec}}
\newtheorem{thm}{Theorem}[section]
\newtheorem{prop}[thm]{Proposition}
\newtheorem{lem}[thm]{Lemma}
\newtheorem{cor}[thm]{Corollary}
\newtheorem{conj}[thm]{Conjecture}
\newtheorem{defn}{Definition}
\newtheorem*{Remark}{Remark}
\theoremstyle{definition}
\title{Ext and the Motivic Steenrod Algebra over $\R$}
\author{Michael A.~Hill}
\begin{document}
\bibliographystyle{amsplain}

\begin{abstract}
We present a descent style, Bockstein spectral sequence computing $\Ext$ over the motivic Steenrod algebra over $\R$ and related sub-Hopf algebras. We demonstrate the workings of this spectral sequence in several examples, providing motivic analogues to the classical computations related to $BP\langle n\rangle$ and $ko$.
\end{abstract}

\maketitle

\section{Introduction}
Motivic stable homotopy theory intertwines classical algebraic geometry and stable algebraic topology. Rather than considering just the stable category of spaces, one considers a category of spaces built out of smooth schemes over a chosen ground field $k$. This framework, the foundations of which can be found in work of Morel and Voevodsky \cite{MoVo99}, allows one to make use of many of the classical tools and techniques from algebraic topology to solve problems like the Milnor conjecture. A nice introduction for topologists is found in work of Dugger and Isaksen \cite{DuIsCell05}.

One of the key features in motivic stable homotopy is that the spheres are bigraded. There is the simplicial sphere $S^{1,0}$ (which is related to the suspension tied to the triangulated structure), and there is a geometric sphere $S^{1,1}=\mathbb G_m$. Taking smash products of these spheres and formally desuspending them produces spheres $S^{p,q}$\!, $p,q\in\Z$, and this yields bigraded homotopy groups:
\[
\pi_{p,q}(X)=[S^{p,q},X].
\]

Much of the information known about these groups is due to Morel. He showed that $\pi_{p,q}(S^{0,0})=0$ if $p<q$ \cite{Mo05}. Moreover, he showed that $\pi_{n,n}(S^{0,0})$ is the Milnor-Witt $K$-theory of the ground field \cite{Mo04}.

Morel also established the foundations of the motivic Adams spectral sequence \cite{Mo99}. This is a trigraded spectral sequence of the form
\[
E_2=\Ext_{\A}^{s,t,u}\big(H^{\ast,\ast}(X),H^{\ast,\ast}(\Spec(k))\big)\Rightarrow\pi_{t-s,u}(X_{2}^{\widehat{}}),
\]
where $\A$ is the mod-$2$ motivic Steenrod algebra, where $H^{\ast,\ast}(-)$ is mod-$2$ motivic cohomology, and where $X_{2}^{\widehat{}}$ is the nilpotent completion of $X$ with respect to the motivic Eilenberg-Maclane spectrum $H\F_2$. The motivic Steenrod algebra and the motivic cohomology of a point were worked out by Voevodsky, and we will review the salient points below. Working over the ground field $\C$, Dugger and Isaksen have successfully implemented a research program to compute the relevant $\Ext$ groups and run the spectral sequence in low dimensions \cite{DuIs}.

This paper serves as a preamble to joint work with Dugger and Isaksen in which we will compute motivic stable homotopy groups using the Adams spectral sequence over $\R$ through a range. The primary tool we will use is a descent style spectral sequence which converts motivic $\Ext$ over $\C$ to motivic $\Ext$ over $\R$. In this paper, we compute several examples which demonstrates the subtleties of the full form. These examples has the added advantage of being related to motivic $BP$ and conjecturally related to the algebraic and Hermitian $K$-theories of $\R$. In particular, this computation should detect the motivic image of $J$. We will return to these applications in the last section.

\section{Background}
\subsection{Motivic Cohomology and the Motivic Steenrod Algebra}
For completeness and for reference, we summarize the basic results needed about $\M_2$, the motivic cohomology of a point, and of $\A^\ast$, the mod $2$ motivic Steenrod algebra. In this section and in all that follows, we will almost exclusively work over $\Spec(\R)$. In a few situations, we will also work over $\Spec(\C)$, and in those cases, we will use a superscript $\C$ to distinguish from the real case.
\begin{thm}[Voevodsky \cite{Vo03}] As an algebra,
\[
\M_2=\F_2[\tau,\rho],
\]
where $|\tau|=(0,1)$ and $|\rho|=(1,1)$.
\end{thm}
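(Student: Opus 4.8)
The statement is best viewed as a repackaging of deep input, so my plan is to reduce to computing each bigraded group $H^{p,q}(\Spec(\R);\F_2)$ separately and then to reassemble the ring from a dimension count together with a nonvanishing check on monomials. The three tools I will combine are: the elementary weight and above-the-diagonal vanishing for the motivic cohomology of a field; the Beilinson--Lichtenbaum comparison with \'etale cohomology in the range $p\le q$ (which is exactly Voevodsky's resolution of the Milnor conjecture, i.e.\ Bloch--Kato at the prime $2$); and an explicit computation of the mod-$2$ Galois cohomology of $\R$. I will invoke the first and second as cited theorems and carry out the third by hand.

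For the additive structure, I first note $\F_2(q)=0$ for $q<0$, so $H^{p,q}=0$ there, and I use the standard vanishing $H^{p,q}(\Spec(\R);\F_2)=0$ for $p>q$ (the motivic complex over a field has cohomology concentrated in degrees $\le q$). In the remaining range $p\le q$ the Beilinson--Lichtenbaum theorem identifies $H^{p,q}(\Spec(\R);\F_2)$ with the \'etale cohomology $H^{p}_{\mathrm{et}}(\Spec(\R);\mu_2^{\otimes q})$. Since $\mu_2\subset\R$, complex conjugation acts trivially on $\mu_2$, so $\mu_2^{\otimes q}$ is the constant module $\F_2$ for every $q$, and $H^{p}_{\mathrm{et}}(\Spec(\R);\F_2)=H^{p}(\mathrm{Gal}(\C/\R);\F_2)=H^{p}(\Z/2;\F_2)$, which is $\F_2$ for $p\ge0$ and $0$ for $p<0$. (Note the lower vanishing for $p<0$ comes for free from the \'etale side, since $p<0\le q$, so no Beilinson--Soul\'e input is needed.) Assembling these cases, $H^{p,q}(\Spec(\R);\F_2)\cong\F_2$ precisely when $0\le p\le q$, and vanishes otherwise.

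It remains to pin down the multiplicative structure. The classes $\tau\in H^{0,1}$, the generator coming from $\mu_2=H^{0}_{\mathrm{et}}(\Spec(\R);\mu_2)$, and $\rho\in H^{1,1}=\R^{\times}/(\R^{\times})^2$, the class of $-1$, yield monomials $\tau^a\rho^b$ of bidegree $a(0,1)+b(1,1)=(b,a+b)$; these biject with the lattice points $0\le p\le q$ via $b=p$, $a=q-p$. Because each nonzero group is one-dimensional over $\F_2$, it suffices to show every such monomial is nonzero. Under the comparison map, $\tau$ goes to the generator of $H^{0}_{\mathrm{et}}(\Spec(\R);\mu_2)$, and cupping with it is the isomorphism induced by the canonical identification $\mu_2^{\otimes q}\cong\mu_2^{\otimes(q+1)}$; hence multiplication by $\tau$ is an isomorphism throughout the Beilinson--Lichtenbaum range. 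The class $\rho^b$ is detected by the nonzero element of $H^{b}(\Z/2;\F_2)$. Therefore $\tau^a\rho^b\neq0$ for all $a,b\ge0$, the monomials form an $\F_2$-basis, and $\M_2=\F_2[\tau,\rho]$ with no relations.

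The genuine mathematical content here is the Beilinson--Lichtenbaum comparison, which is precisely Voevodsky's theorem and which I invoke rather than reprove; granting it, the only place demanding care is the multiplicative bookkeeping at the boundary $p=q$, where a product could a priori collapse. The \'etale realization dispatches this cleanly by making $\tau$ invertible, so the main obstacle is conceptual (identifying the right deep input) rather than computational.
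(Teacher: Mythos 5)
You should first be aware that the paper contains no proof of this statement: it is quoted as background and attributed wholesale to Voevodsky \cite{Vo03}, so the comparison here is between your argument and a bare citation. That said, your derivation is correct, and it is the standard way the computation is extracted from the deep inputs. The additive part --- $H^{p,q}(\Spec(\R);\F_2)\cong\F_2$ for $0\le p\le q$ and $0$ otherwise --- follows exactly as you say from negative-weight vanishing, above-the-diagonal vanishing for fields, Beilinson--Lichtenbaum in the range $p\le q$, triviality of the Galois action on $\mu_2$, and $H^{\ast}(\Z/2;\F_2)$ being polynomial on a degree-one class; your handling of $p<0$ via the \'etale side (rather than Beilinson--Soul\'e) is a nice economy. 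The multiplicative part is also sound: distinct monomials $\tau^a\rho^b$ occupy distinct bidegrees and every nonzero group is one-dimensional over $\F_2$, so it suffices that each monomial is nonzero, which you get from multiplicativity of the cycle class map, the fact that cupping with the image of $\tau$ is the Tate-twist isomorphism $H^{p}_{\mathrm{et}}(\Spec(\R);\mu_2^{\otimes q})\to H^{p}_{\mathrm{et}}(\Spec(\R);\mu_2^{\otimes(q+1)})$, and nonvanishing of all cup powers of the generator of $H^{1}(\Z/2;\F_2)$; this also rules out relations, since any homogeneous relation would force a single monomial to vanish. One caveat is attributional rather than mathematical: the equivalence between the Bloch--Kato (Milnor) conjecture and the Beilinson--Lichtenbaum comparison you invoke is itself a nontrivial theorem of Suslin--Voevodsky and Geisser--Levine, so ``exactly Voevodsky's resolution of the Milnor conjecture'' slightly understates the input. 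As for what each approach buys: the paper's citation is appropriate because this computation is pure input to its Bockstein machinery, whereas your argument explains \emph{why} the answer has this shape --- $\tau$ encodes invertibility of the Tate twist on the \'etale side, and $\rho$ is the class of $-1$ generating the polynomial mod-$2$ Galois cohomology of $\R$ --- which is precisely the structure (the invariant ideal $(\rho)$ and $\tau$-power bookkeeping) that the rest of the paper exploits.
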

We will, in our description of the dual Steenrod algebra and homology, use the standard convention that $H^{\ast,\ast}=H_{-\ast,-\ast}$. We will continue to use the names $\tau$ and $\rho$.

Voevodsky also computed the motivic Steenrod algebra over $\R$ \cite{VoPower03, Vo07}. It is the associative algebra over $\M_2$ generated by classes $Sq^{2i}$ and $Sq^{2i-1}$\!, in bidegrees $(2i,i)$ and $(2i-1,i-1)$ respectively, subject to a substantially more complicated form of the Adem relations (which will not be needed directly). For us, the most important part is that $Sq^1\tau=\rho$. Since the ground ring $\M_2$ is not central, the dual Steenrod algebra $\A$ is a Hopf algebroid over $\M_2$.

\begin{thm}[Voevodsky \cite{VoPower03, Vo07}]
\[
\A=\big(\M_2,\M_2[\xi_1,\dots][\tau_0,\dots]/(\tau_i^2-\rho\tau_{i+1}-\rho\tau_0\xi_{i+1}-\tau\xi_{i+1})\big).
\]
The left unit is the canonical inclusion, while the right unit is given by $\eta_R(\rho)=\rho$ and $\eta_R(\tau)=\tau+\tau_0\rho$. The coproducts on the generators $\xi_i$ and $\tau_i$ are the classical coproducts. The bidegrees of the elements are given by
$|\xi_i| = (2^{i+1}-2,2^i-1)$ and $|\tau_i| = (2^{i+1}-1,2^i-1)$.
\end{thm}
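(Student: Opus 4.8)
The plan is to obtain the Hopf algebroid $\A = (\M_2,\Gamma)$ as the $\M_2$-linear dual of the motivic Steenrod algebra $\A^\ast$ described above, following Milnor's dualization strategy adapted to the motivic setting. The indispensable geometric input is the motivic cohomology of the geometric classifying space $B\mu_2$: over $\R$ one has $H^{\ast,\ast}(B\mu_2) \cong \M_2[u,v]/(u^2 - \rho u - \tau v)$, with $|u| = (1,1)$ and $|v| = (2,1) = |\beta u|$. This single quadratic relation is the source of all the motivic-over-$\R$ subtlety, and setting $\rho = 0$ (resp. $\rho = 0$, $\tau = 1$) should recover the complex (resp. classical) dual, which is a useful sanity check. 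First I would compute the action of the $Sq^i$ on $H^{\ast,\ast}(B\mu_2)$ and, via the Cartan formula, on finite products $(B\mu_2)^{\times n}$; the resulting total-power formula defines a pairing between $\A^\ast$ and $H^{\ast,\ast}((B\mu_2)^{\times n})$ from which a Milnor-type dual basis can be read off.

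Pairing the monomials $u^{\epsilon} v^{r}$ against this structure picks out the dual generators: the $\tau_i$ dual to the exterior (Bockstein) direction carried by $u$, and the $\xi_i$ dual to the polynomial direction carried by $v$, in the stated bidegrees $|\tau_i| = (2^{i+1}-1, 2^i-1)$ and $|\xi_i| = (2^{i+1}-2, 2^i-1)$. Because the comultiplication on these generators is dual to the iterated diagonal of $B\mu_2$, which is formally identical to the topological one, the coproducts on $\xi_i$ and $\tau_i$ must coincide with the classical formulas; the coefficients $\tau$ and $\rho$ do not intrude on the comultiplication of the generators themselves.

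The two genuinely new features---the right unit and the quadratic relation---are where the work concentrates, and I expect the relation to be the main obstacle. The right unit should fall out of dualizing the coefficient action: since $\M_2$ is not central the left and right inclusions differ, and $Sq^1\tau = \rho$ dualizes to $\eta_R(\tau) = \tau + \rho\tau_0$ while $\eta_R(\rho) = \rho$. For the relation I would square $\tau_i$ inside $\Gamma$ and track how the defect $u^2 = \rho u + \tau v$ propagates through the comultiplication. The $\tau v$ term should contribute $\tau\xi_{i+1}$, but expressed through the right unit this reads $\eta_R(\tau)\xi_{i+1} = \tau\xi_{i+1} + \rho\tau_0\xi_{i+1}$, which already accounts for one of the $\rho$-terms, while the $\rho u$ part of the defect supplies the remaining $\rho\tau_{i+1}$. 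The real difficulty is carrying out this bookkeeping honestly in a non-commutative, non-central setting---keeping the left and right $\M_2$-actions distinct and confirming that $\tau_i^2$ equals exactly these terms and no others. I would pin down the coefficients by first checking that both sides are homogeneous of the correct bidegree (which, as the degree count shows, already restricts the admissible monomials severely) and respect the comodule-algebra structure, then evaluating against a handful of low-dimensional admissible monomials; these constraints leave essentially no freedom, so the relation is rigid once the $B\mu_2$ computation and the right unit are in hand.
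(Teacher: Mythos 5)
The paper itself offers no proof of this theorem---it is quoted directly from Voevodsky \cite{VoPower03, Vo07}---and your proposal reconstructs essentially the argument of those cited sources: Milnor-style dualization with the computation $H^{\ast,\ast}(B\mu_2)\cong\M_2[u,v]/(u^2-\rho u-\tau v)$ as the geometric input, the right unit $\eta_R(\tau)=\tau+\rho\tau_0$ dual to $Sq^1\tau=\rho$, and the quadratic relation obtained by pushing $u^2=\rho u+\tau v$ through the coaction. Your accounting of the terms---$\eta_R(\tau)\xi_{i+1}=\tau\xi_{i+1}+\rho\tau_0\xi_{i+1}$ from the $\tau v$ part of the defect and $\rho\tau_{i+1}$ from the $\rho u$ part---is exactly the coefficient comparison (of $v^{2^{i+1}}$ in $\lambda(u)^2=\lambda(u^2)$) that yields the relation in the literature, so the approach is correct and the same.
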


Just as in the classical case, we also consider finitely generated subalgebras of the Steenrod algebra. In the dual case, these give Hopf algebroid quotients of $\A$ which are finitely generated over $\M_2$.

\begin{defn}\mbox{}
\begin{enumerate}
\item Let $\A(n)$ denote the quotient Hopf algebroid obtained by reducing modulo the ideal $(\xi_1^{2^n}, \xi_2^{2^{n-1}},\dots,\xi_n^2,\xi_{n+1},\dots)+(\tau_{n+1},\dots)$:
\[
\A(n)=\big(\M_2,\M_2[\xi_1,\dots,\xi_{n}][\tau_0,\dots,\tau_n]/(\xi_i^{2^{n-i+1}}, \tau_i^2-\rho\tau_{i+1}-\rho\tau_0\xi_{i+1}-\tau\xi_{i+1})\big).
\]

\item Let $E(n)$ denote the reduction modulo $(\xi_1,\dots)+(\tau_{n+1},\dots)$.
\[
E(n)=\big(\M_2,\M_2[\tau_0,\dots,\tau_n]/(\tau_i^2-\rho\tau_{i+1}, \tau_n^2)\big).
\]
\end{enumerate}
\end{defn}
The Hopf algebroid $\A(n)$ is dual to the sub-Hopf algebra of the motivic Steenrod algebra generated by $Sq^{2^i}$ for $i\leq n$, while $E(n)$ is dual to the sub-Hopf algebra generated by the Milnor primitives $Q_i$ for $i\leq n$.

\subsection{Bockstein Spectral Sequence}

The Hopf algebroid $\A$ has a distinguished invariant ideal: $(\rho)$. If we reduce modulo $\rho$, then we recover the complex motivic dual Steenrod algebra $\A^\C$. We therefore have a Miller-Novikov style Bockstein spectral sequence for undoing this reduction \cite{RaGB}.

\begin{prop}
There is a quadruply graded spectral sequence of algebras of the form
\[
E_1=\Ext_{\A^{\C}}(\M_2^\C,\M_2^\C)[\rho]\Longrightarrow\Ext_{\A}(\M_2,\M_2).
\]
\end{prop}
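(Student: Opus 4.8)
The plan is to realize this as the algebraic Bockstein (Miller--Novikov type) spectral sequence associated to the invariant regular element $\rho$, following the formalism of \cite{RaGB}. First I would record the two structural facts that make $\rho$ amenable to this machine. It is \emph{invariant}, since $\eta_R(\rho)=\rho=\eta_L(\rho)$ by Voevodsky's computation; and it is a \emph{non-zero-divisor} on $\A$, because the defining relations only ever rewrite $\tau_i^2$, so that $\A$ is free as a left $\M_2=\F_2[\tau,\rho]$-module on the admissible monomials $\prod\xi_i^{a_i}\prod\tau_i^{\epsilon_i}$ with $\epsilon_i\in\{0,1\}$. Consequently $\rho$ acts injectively on each term $\bar{\A}^{\otimes_{\M_2} s}$ of the cobar complex, where $\bar{\A}$ is the augmentation coideal.

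Next I would filter the normalized cobar complex $\Omega^\bullet=\Omega^\bullet_\A(\M_2)$ computing $\Ext_\A(\M_2,\M_2)$ by $\rho$-divisibility, setting $F^p\Omega^\bullet=\rho^p\,\Omega^\bullet$. The crucial point is that the cobar differential preserves this filtration: $d$ is assembled from $\eta_R$ and the coproduct, and because $\rho$ is invariant it commutes past every tensor factor and past the unit maps, so $d(\rho^p x)=\rho^p d(x)$ and hence $d(F^p)\subseteq F^p$. Passing to the associated graded, multiplication by $\rho$ identifies $\mathrm{gr}^p\Omega^\bullet\cong\rho^p\cdot(\Omega^\bullet/\rho\,\Omega^\bullet)$, and reduction modulo $\rho$ carries $\A$ onto $\A^\C$: the relation $\tau_i^2-\rho\tau_{i+1}-\rho\tau_0\xi_{i+1}-\tau\xi_{i+1}$ becomes $\tau_i^2=\tau\xi_{i+1}$ and the twisted right unit $\eta_R(\tau)=\tau+\tau_0\rho$ becomes the untwisted $\eta_R(\tau)=\tau$. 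Thus $\Omega^\bullet/\rho\,\Omega^\bullet$ is precisely the cobar complex of $\A^\C$ over $\M_2^\C$, and the associated graded is $\Omega^\bullet_{\A^\C}(\M_2^\C)\otimes_{\F_2}\F_2[\rho]$ with $\rho$ a permanent cycle. Taking cohomology then yields the stated
\[
E_1=\Ext_{\A^\C}(\M_2^\C,\M_2^\C)[\rho].
\]

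For the algebra structure and convergence I would argue as follows. The cobar construction is a differential graded $\M_2$-algebra and the $\rho$-filtration is multiplicative, $F^p\cdot F^q\subseteq F^{p+q}$, so the whole spectral sequence is one of algebras with $\rho$ surviving to the indicated polynomial generator. The quadruple grading is the homological (cobar) degree, the two internal motivic degrees inherited from $\A^\C$, and the $\rho$-filtration degree $p$. Convergence is strong because the $\rho$-filtration is finite in each internal tridegree of $\Ext_\A$: since $\rho$ has bidegree $(1,1)$ it strictly raises the topological degree, and the cobar complex is finite-dimensional in each full bidegree, so only finitely many $\rho$-powers contribute to any fixed tridegree and the filtration is bounded. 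This gives strong convergence to $\Ext_\A(\M_2,\M_2)$.

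The main obstacle is purely the Hopf-algebroid nature of $\A$: since the base $\M_2$ is non-central and $\eta_R$ is genuinely twisted by the term $\tau_0\rho$, one must check carefully that divisibility by $\rho$ is well defined and differential-stable with respect to both the left and right comodule structures, and that the twisting term $\tau_0\rho$ indeed raises $\rho$-filtration so that it disappears on the associated graded. Verifying this compatibility---essentially that $\rho$ behaves like a central invariant class in spite of the non-central ground ring---is where the genuine care lies; once it is in hand, everything else is a formal instance of the filtered-complex Bockstein formalism.
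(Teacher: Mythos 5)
Your proposal is correct and is essentially the paper's own argument: the paper simply observes that $(\rho)$ is an invariant ideal with $\A/\rho\cong\A^\C$ and invokes the Miller--Novikov style Bockstein machinery of \cite{RaGB}, which is exactly the $\rho$-power filtration of the cobar complex that you construct explicitly. Your write-up just supplies the details (invariance and regularity of $\rho$, multiplicativity of the filtration, identification of the associated graded, and convergence via finiteness of the filtration in each tridegree) that the paper delegates to the citation.
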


Since the motivic Steenrod algebra is bigraded, the $\Ext$ groups are trigraded. We will use the following Adams convention: $|x|=(t-s,s,u)$, where $t$ is the internal topological degree of any class representing $x$ in the cobar complex, $s$ is the cohomological degree, and $u$ is the motivic weight. The only algebra generator of non-zero Bockstein degree is $\rho$, so when we will only describe the aforementioned tridegrees, even in the Bockstein spectral sequence.

There are some immediate permanent cycles.

\begin{prop}\label{prop:PermCycles}
If $x\in \Ext_{\A^\C}(\M_2,\M_2)$ can be represented in the cobar complex using only polynomials in the classes $\tau_i$ and $\xi_i$, then $x$ survives the Bockstein spectral sequence.
\end{prop}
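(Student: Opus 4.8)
The plan is to show that $x$ is a permanent cycle by lifting a cobar representative all the way to an honest cocycle in the cobar complex $C^\bullet(\A)$ for $\A$ over $\R$. In a $\rho$-Bockstein spectral sequence of this type, a class of $\Ext_{\A^\C}$ survives as soon as it is the reduction of a class in $\Ext_\A$; and to exhibit such a class it suffices to lift a cocycle representative $z$ for $x$ to an honest cocycle $\tilde z \in C^\bullet(\A)$ with $\tilde z \equiv z \pmod{\rho}$. All higher Bocksteins on $x$ then vanish for degree-of-$\rho$ reasons. So the whole argument reduces to producing such a lift.

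The first thing I would record is where the two cobar differentials can differ. Since the coproducts on $\xi_i$ and $\tau_i$ are the classical ones in both $\A$ and $\A^\C$, the ``coproduct part'' of $d_\A$ agrees termwise with that of $d_{\A^\C}$. The differential can only pick up $\rho$ through two mechanisms: the right unit $\eta_R(\tau)=\tau+\rho\tau_0$, and the rewriting of products inside a tensor slot via the quadratic relation, which in $\A$ reads $\tau_i^2=\tau\xi_{i+1}+\rho(\tau_{i+1}+\tau_0\xi_{i+1})$ instead of the $\C$-motivic $\tau_i^2=\tau\xi_{i+1}$. By hypothesis $x$ has a representative $z$ with coefficients in $\F_2$, so no factor of $\tau$ or $\rho$ is present and the first mechanism never contributes. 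Taking $\tilde z$ to be the same $\F_2$-combination of monomials read in $\A$, reduction modulo $\rho$ is a chain map sending $\tilde z\mapsto z$, whence $d_\A\tilde z \equiv d_{\A^\C}z = 0 \pmod{\rho}$, i.e. $d_\A\tilde z=\rho\,w$ for some cochain $w$.

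To organize and ultimately kill this $\rho$-correction I would pass to the free polynomial Hopf algebroid $\bar\A=\big(\M_2,\M_2[\xi_1,\dots][\tau_0,\dots]\big)$ carrying the classical coproducts and $\eta_R(\tau)=\tau+\rho\tau_0$ but \emph{no} quadratic relations. Its reduction $\bar\A^\C$ modulo $\rho$ surjects onto $\A^\C$, and $\bar\A$ surjects onto $\A$, compatibly with reduction. On $C^\bullet(\bar\A^\C)$ the differential is computed purely from the classical coproducts, with no relation to invoke and no $\tau$ to carry past $\eta_R$, so it is manifestly free of $\rho$. Consequently a cocycle lift of $z$ to $C^\bullet(\bar\A^\C)$ would map, along $\bar\A\to\A$, to an honest cocycle in $C^\bullet(\A)$ reducing to $z$, which is exactly the lift sought.

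The hard part, and the step I expect to be the main obstacle, is producing a cocycle in the free complex. The difficulty is that $\bar\A^\C\to\A^\C$ is not injective on non-reduced monomials: for instance $\tau_0^2\xi_2$ and $\xi_1\tau_1^2$ both map to $\tau\xi_1\xi_2$. Thus a $\xi,\tau$-polynomial that is a cocycle in $\A^\C$ need not be one before the relations are imposed, and $d_{\bar\A^\C}\tilde z$ may be a nonzero element of $\ker(\bar\A^\C\to\A^\C)$. My plan is to filter both complexes by $\tau$-weight and observe that this obstruction lives entirely in strictly positive $\tau$-weight, where the kernel is generated by the relation classes; one can then remove it by successively re-choosing the lift, trading each occurrence of a reduced monomial $\tau\xi_{i+1}$ for $\tau_i^2$, until after finitely many corrections one reaches a genuine cocycle $\hat z\in C^\bullet(\bar\A^\C)$ with image $z$. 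Pushing $\hat z$ forward to $\A$ produces the desired permanent cocycle and finishes the proof.
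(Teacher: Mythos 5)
Your overall strategy---produce an honest cocycle in the cobar complex of $\A$ whose reduction represents $x$, and conclude that $x$ is a permanent cycle---is exactly the paper's. The paper's proof is the single observation that for a representative $z$ which is an $\F_2$-polynomial in the $\xi_i$ and $\tau_i$, the cobar differential is computed purely from the classical coproduct formulas: no coefficient $\tau$ or $\rho$ ever has to move past $\eta_R$, so the computation is \emph{literally the same} with or without $\rho$, hence $d_{\A}z=0$ because $d_{\A^\C}z=0$, and $z$ itself is the required cocycle. You correctly isolate the one place where this identification can fail: expanding coproducts of monomials can create $\tau_i^2$ inside a tensor slot, and the quadratic relation rewrites this differently over $\R$ than over $\C$. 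This is a genuine subtlety (it is why the paper warns that one must distinguish $\xi_1$ from $\tau_0^2$ when lifting classical representatives), and in the paper's reading of ``classical'' it is excluded: the intended representatives are ones (primitives such as $[\tau_i]$, $[\xi_1]$, and suitably chosen representatives of $x$ and $v_1^4$) whose cocycle condition never invokes the relation, so that the differential vanishes identically in your free complex $C^\bullet(\bar\A)$ and no correction is needed.

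The gap is in your last paragraph, which is the crux under your weaker reading of the hypothesis. What you must produce is a $\tau,\rho$-free lift $\hat z$ with $d_{\bar\A^\C}\hat z=0$, or at least with $d_{\bar\A}\hat z$ in the kernel of $\bar\A\to\A$; and this kernel is strictly smaller, on $\tau,\rho$-free elements, than the kernel of $\bar\A^\C\to\A^\C$. Indeed your own example already shows this: $\tau_0^2\xi_2+\xi_1\tau_1^2$ maps to $0$ in $\A^\C$ but to $\rho(\tau_1\xi_2+\xi_1\tau_2)\neq 0$ in $\A$, so the obstruction you face is generically nonzero and really must be removed, not merely observed to be $\rho$-divisible. (Your intermediate step $d_\A\tilde z\equiv 0\pmod\rho$ carries no content at all: it holds for any lift of any cocycle, being the statement that reduction mod $\rho$ is a chain map; the Bockstein differentials measure exactly the failure to do better.) Your proposed repair---``successively re-choosing the lift, trading each occurrence of $\tau\xi_{i+1}$ for $\tau_i^2$''---is not an argument: the lift $\tilde z$ is $\tau$-free, so it contains no monomial $\tau\xi_{i+1}$ to trade; the actual freedom is between distinct $\tau,\rho$-free preimages of the same element (such as $\tau_0^2\xi_2$ versus $\xi_1\tau_1^2$), and what has to be shown is that the obstruction cocycle $d_{\bar\A^\C}\tilde z$, which lives in the kernel subcomplex of $C^\bullet(\bar\A^\C)\to C^\bullet(\A^\C)$, is a coboundary of an element of that kernel subcomplex. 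Nothing in your sketch addresses this connecting-homomorphism obstruction or the termination of the procedure. As written, your argument proves the proposition only for representatives whose differential already vanishes in the free complex---which is the paper's hypothesis and its one-line proof---while the extra generality you aim for is left unestablished.
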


This is because the $\rho$-Bockstein spectral sequence measures the deviation of a cobar differential from being zero. If our classes are classical, then their bar differential is zero regardless of the presence or absence of $\rho$. We will use this to see that certain classes are permanent cycles.

Before continuing, we present a general lemma on the creation of new permanent cycles in Bockstein spectral sequences. This will help us streamline many of the computations.

\begin{lem}\label{lem:BocksteinCycles}
In a Bockstein spectral sequence associated to the filtration of an associative, homotopy-commutative differential graded algebra $A$ by a central element $b$, if $d_r(x)=b^r\cdot a$, then every element in $a\cdot E_{r+1}$ is a permanent cycle.
\end{lem}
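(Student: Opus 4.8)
The plan is to reduce the statement to a single application of the Leibniz rule on the Bockstein spectral sequence, using that the hypotheses on $A$ (associative and homotopy-commutative) are exactly what is needed to make the associated spectral sequence multiplicative with $d_r$ a derivation. First I would fix chain-level representatives. The relation $d_r(x)=b^r\cdot a$ means that a representative $x\in A$ of the supporting class satisfies $dx=b^r a$ in $A$. Applying $d$ and using $d^2=0$ and $db=0$ gives $b^r(da)=0$, and since $b$ is central and, in the cobar complex at hand, a non-zero-divisor ($\rho$ being a polynomial generator), this forces $da=0$. Thus $a$ is a genuine cycle and represents a class on $E_1$, so the products $a\cdot E_{r+1}$ are at least defined.

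Next I would take an arbitrary class $[c]\in E_{r+1}$ with a representative $c\in A$. Surviving to $E_{r+1}$ means precisely that $dc=b^{r+1}c'$ for some $c'\in A$, equivalently $d_r[c]=0$ on $E_r$. The heart of the argument is then the Leibniz formula applied to $xc$. At the chain level, using centrality of $b$,
\[
d(xc)=(dx)\,c+(-1)^{|x|}x\,(dc)=b^r a c+(-1)^{|x|}b^{r+1}x c'=b^r\bigl(ac+(-1)^{|x|}b\,xc'\bigr).
\]
Hence $xc$ survives to $E_r$, and $d_r[xc]$ is represented by $ac+(-1)^{|x|}b\,xc'$, which reduces to $ac$ modulo $b$. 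Since every page of the Bockstein spectral sequence is a subquotient of $E_1=H(A/b)$, the correction term $b\,xc'$ is invisible there, so $d_r[xc]=[ac]$ on $E_r$. This is just the Leibniz formula $d_r[xc]=d_r[x]\cdot[c]\pm[x]\cdot d_r[c]=[a][c]\pm 0=[ac]$, the second term vanishing because $[c]\in E_{r+1}$.

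It follows that $[ac]$ lies in the image of $d_r$, hence is zero on $E_{r+1}$ and therefore a permanent cycle; as $[c]$ was arbitrary, every element of $a\cdot E_{r+1}$ is a permanent cycle (indeed a boundary). I expect the only point requiring genuine care --- the main obstacle --- to be the structural bookkeeping that makes the spectral sequence multiplicative: one must verify that homotopy-commutativity and associativity of $A$ endow each $E_r$ with a well-defined product for which $d_r$ is a derivation, and that passing between chain-level representatives and classes (in particular discarding the $b\,xc'$ correction) is legitimate. Once the Leibniz rule is available the conclusion is immediate, and in the motivic applications the characteristic-two setting removes all signs.
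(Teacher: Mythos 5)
Your chain-level Leibniz computation is fine as far as it goes, but the conclusion you draw from it is wrong, and the error is a filtration bookkeeping mistake that matters. In the Miller--Novikov style Bockstein spectral sequence used here, the pages are graded by powers of $b$: the $E_1$-page is $H(A/b)[b]$, not $H(A/b)$ (this is why the hypothesis is written $d_r(x)=b^r\cdot a$ rather than $d_r(x)=a$), and a $d_r$-differential raises the $b$-filtration by exactly $r$, so its image consists of classes divisible by $b^r$. Your computation $d(xc)=b^r\bigl(ac+(-1)^{|x|}b\,xc'\bigr)$ therefore shows that $d_r([x][c])=b^r[ac]$ on $E_r$; what dies on $E_{r+1}$ is the filtration-$r$ class $b^r[ac]$, \emph{not} the filtration-zero class $[ac]$. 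You cannot ``divide by $b^r$'': $b$ is not invertible on the page, and $[ac]$ is in general nonzero on $E_{r+1}$ --- it has merely become $b^r$-torsion. Your final claim that every element of $a\cdot E_{r+1}$ is a boundary would make the lemma vacuous and contradicts the paper's own use of it: from $d_1(\tau)=\rho v_0$ one gets $E_2=\F_2[\rho,\tau^2,v_0,\dots,v_n]/(\rho v_0)$, in which $v_0$ and the classes $v_0(j)$ represented by $\tau^{2j}v_0$ are nonzero permanent cycles that generate part of $E_\infty$; your argument would force $v_0=0$ on $E_2$.

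What your Leibniz argument actually establishes is only the first half of the paper's proof: every element of $a\cdot E_{r+1}$ is annihilated by $b^r$ (the paper gets this from associativity and commutativity, since $b^r a=0$ from $E_{r+1}$ on). The real content of the lemma --- entirely missing from your proposal --- is why a class killed by $b^r$ can neither support nor receive a $d_s$-differential for $s>r$. For the source claim, which is the one ``permanent cycle'' requires: if $d_s([ac])=v$, then $b^rv=d_s\bigl(b^r[ac]\bigr)=d_s(0)=0$ by $b$-linearity of the differentials, so $v$ would be killed by $b^r$ with $r<s$; but a nonzero target of a $d_s$-differential is divisible by $b^s$ and sits in $b$-filtration at least $s$, where it cannot be $b$-torsion --- this is the structural fact the paper invokes when it says the target of a $d_s$-differential must be $b$-torsion free. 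This torsion-versus-filtration argument, not the multiplicativity of the spectral sequence, is the genuine obstacle, and it is precisely the step your proof skips over by way of the erroneous ``indeed a boundary'' conclusion.
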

\begin{proof}
The reason behind this lies in the construction of Bockstein spectral sequences. A $d_r$-differential $d_r(x)=ab^r$ means that in $H^\ast(A)$, $ab^r=0$ while $ab^{k}\neq 0$ for $k<r$. By associativity and commutativity of the multiplication in $H^\ast(A)$, we conclude that for all classes $c\in E_{r+1}$, $a\cdot c$ is annihilated by $b^r$. This means it can neither be the target of a $d_{s}$-differential (since it is killed by a smaller power of $b$) nor be the source of a $d_{s}$-differential (the target of which must be $b$-torsion free).
\end{proof}

\begin{Remark}
The associativity condition is essential. The homotopy-commutativity can be greatly weakened, but the statement of the lemma becomes much more complicated, since we must consider any product of elements on $E_{r+1}$, at least one of which is $a$. For our purposes, $A$ can be taken to be the cobar complex.
\end{Remark}

Computing $\Ext_{\A^\C}(\M_2^\C,\M_2^\C)$ is at least as difficult as computing the classical $\Ext$ groups. We therefore will look at two much simpler cases, illustrating the $\rho$-Bockstein spectral sequence by using it to compute  $\Ext_{E(n)}(\M_2,\M_2)$ for all $n$ and $\Ext_{\A(1)}(\M_2,\M_2)$.

The starting inputs are the cohomology of $E(n)^\C$ and $\A(1)^\C$.

\begin{prop}\label{prop:BocksteinSetup}\mbox{}
\begin{enumerate}
\item For $0\leq n\leq \infty$,
\[
\Ext_{E(n)^\C}(\M_2^\C,\M_2^\C)\cong\M_2^\C[v_0,\dots,v_n],
\]
where $\left| v_i\right|=(2^{i+1}-2,1,2^i-1)$.

\item As an algebra,
\[
\Ext_{\A(1)^\C}(\M_2^\C,\M_2^\C)\cong\M_2^\C[v_0, \eta, x, v_1^4]/(v_0\eta, \tau\eta^3, \eta x, x^2-v_0^2v_1^4),
\]
where $\left|\eta\right|=(1,1,1)$, $\left| x\right|=(4,3,2)$, and the classes $v_0$ and $v_1^4$ have the same tridegrees as above.
\end{enumerate}
\end{prop}
\begin{proof}
For $E(n)$, this is an immediate consequence of a change-of-rings theorem: the Hopf algebra $E(n)^\C$ is the classical Hopf algebra $E(n)$ base changed to $\M_2$. For $\A(1)^\C$\!, this computation can be done in many ways, and a resolution-based approach will appear in the thesis of Shkembi \cite{Sh09}.
\end{proof}

In the cohomology of $E(n)$, the classes $v_i$ are all classical. By Proposition~\ref{prop:PermCycles}, they are all permanent cycles. For $\A(1)$, the classes $v_0$, $\eta$, $x$, and $v_1^4$ are classical: they have cobar representatives with no reference to $\rho$ or $\tau$ (although we must be careful to distinguish between $\xi_1$ and $\tau_0^2$ when lifting the usual representatives). Proposition~\ref{prop:PermCycles} ensures that these are also all permanent cycles.

\section{$\Ext$ over $E(n)$}\label{sec:ExtE(n)}
\subsection{Bockstein Spectral Sequence}
Throughout this section, let $n$ be a positive integer or infinity. In this section, we will prove the following theorem.

\begin{thm}\label{thm:ExtE(n)}
As an algebra,
\[
\Ext_{E(n)}(\M_2,\M_2)=\F_2[\rho, \tau^{2^{n+1}}, v_i(j) | 0\leq i\leq n, 0\leq j]/ \rho^{2^{i+1}-1}v_i(j),
\]
and subject moreover to two additional relations:
\begin{enumerate}
\item If $i\geq k$, then $v_i(j)\cdot v_k(\ell)=v_i(j+2^{k-i}\ell)\cdot v_k(0)$,
\item and if $j\geq 2^{n-i},$ then $v_i(j)=\tau^{2^{n+1}} v_i(j-2^{n-i})$.
\end{enumerate}

Moreover, the class $v_i(j)$ is represented on $E_1$ by $\tau^{2^{i+1}j}v_i$.
\end{thm}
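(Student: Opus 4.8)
The plan is to run the $\rho$-Bockstein spectral sequence whose input is supplied by Proposition~\ref{prop:BocksteinSetup}(1). Writing $\M_2^\C=\F_2[\tau]$, the $E_1$-page is the polynomial algebra
\[
E_1=\Ext_{E(n)^\C}(\M_2^\C,\M_2^\C)[\rho]=\F_2[\tau][v_0,\dots,v_n][\rho].
\]
Here $\rho$ is the Bockstein generator, and the classes $v_i$ are classical, hence permanent cycles by Proposition~\ref{prop:PermCycles}. Every differential will ultimately be forced by the single structural fact $\eta_R(\tau)=\tau+\rho\tau_0$, together with the defining relations $\tau_j^2=\rho\tau_{j+1}$ (and $\tau_n^2=0$) of $E(n)$.

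First I would compute the fundamental family of differentials on the powers $\tau^{2^i}$. Applying $\eta_R$ and the Frobenius gives $\eta_R(\tau^{2^i})=\tau^{2^i}+\rho^{2^i}\tau_0^{2^i}$, so the cobar differential of $\tau^{2^i}$ is $\rho^{2^i}\tau_0^{2^i}$. Iterating $\tau_j^2=\rho\tau_{j+1}$ telescopes $\tau_0^{2^i}=\rho^{2^i-1}\tau_i$ for $i\le n$, whence
\[
d_{2^{i+1}-1}(\tau^{2^i})=\rho^{2^{i+1}-1}v_i.
\]
For $i=n+1$ the same reduction instead terminates on $\tau_n^2=0$, so $d(\tau^{2^{n+1}})=0$ and $\tau^{2^{n+1}}$ is a permanent cycle; this is the polynomial generator appearing in the theorem.

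Next I would feed each of these differentials into Lemma~\ref{lem:BocksteinCycles}. Since $\tau^{2^{i+1}}=(\tau^{2^i})^2$ is a square it survives to the page $E_{2^{i+1}}$, and so do all of its powers $\tau^{2^{i+1}j}$. Applied to $d_{2^{i+1}-1}(\tau^{2^i})=\rho^{2^{i+1}-1}\cdot v_i$, the lemma then shows that every element of $v_i\cdot E_{2^{i+1}}$ is a permanent cycle; in particular the classes
\[
v_i(j):=\tau^{2^{i+1}j}v_i\in v_i\cdot E_{2^{i+1}}
\]
are permanent cycles with the stated $E_1$-representatives, each annihilated by $\rho^{2^{i+1}-1}$. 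This produces all the named generators and the torsion relations $\rho^{2^{i+1}-1}v_i(j)=0$ at one stroke.

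It remains to show that these classes, together with $\rho$ and $\tau^{2^{n+1}}$, account for all of $E_\infty$, and to pin down the relations. I would argue by induction over the differential families $r=2^{i+1}-1$, $0\le i\le n$, maintaining as hypothesis that after the $i$-th family the $\tau$-generator has become $\tau^{2^{i+1}}$ and the surviving classes are spanned by $\rho$-powers times $\tau^{2^{i+1}}$-powers times the $v_k(j)$ with $k\le i$. The crucial point, and the main obstacle, is to check that the next family kills nothing unexpected: by the Leibniz rule $d_{2^{i+2}-1}(\tau^{2^{i+1}}v_k(j))=\rho^{2^{i+2}-1}v_{i+1}v_k(j)$, and this vanishes because $\rho^{2^{k+1}-1}v_k(j)=0$ with $2^{k+1}-1\le 2^{i+1}-1<2^{i+2}-1$. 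In other words, the $\rho$-torsion already accumulated on the lower generators absorbs each would-be differential, so the only effect of the $(i+1)$-st family is to double the $\tau$-period from $\tau^{2^{i+1}}$ to $\tau^{2^{i+2}}$ and to make $v_{i+1}$ into $\rho^{2^{i+2}-1}$-torsion. Carrying out this reorganization carefully, in particular tracking how the doubling of the $\tau$-period splits the $v_k(j)$ into genuinely new generators, is where the bookkeeping is delicate. Once the spectral sequence collapses at $E_{2^{n+1}}=E_\infty$, the multiplicative relations (1) and the periodicity (2) follow by multiplying the explicit cobar representatives $\tau^{2^{i+1}j}v_i$ and normalizing the $\tau$-weight onto the highest-index factor; since the $\rho$-torsion orders are exactly determined, there is no room for hidden extensions and the displayed algebra structure follows.
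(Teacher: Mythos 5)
Your spectral sequence computation is essentially the paper's: the same differentials $d_{2^{i+1}-1}(\tau^{2^i})=\rho^{2^{i+1}-1}v_i$ derived from $\eta_R(\tau)=\tau+\rho\tau_0$ together with $\tau_j^2=\rho\tau_{j+1}$ and $\tau_n^2=0$; the same application of Lemma~\ref{lem:BocksteinCycles} to manufacture the permanent cycles $v_i(j)=\tau^{2^{i+1}j}v_i$ with their torsion relations; and the same induction over the families of differentials, with the accumulated $\rho$-torsion absorbing all would-be differentials on products. Up through the determination of $E_\infty$ your argument is sound and matches the paper's Theorem~\ref{thm:E(n)Diffs}.

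The gap is in your final sentence. The spectral sequence computes only the associated graded of $\Ext_{E(n)}(\M_2,\M_2)$ with respect to the $\rho$-filtration, so the relations (1) and (2), read off by multiplying $E_1$-representatives, hold \emph{a priori} only modulo classes of higher $\rho$-filtration. Your claim that ``since the $\rho$-torsion orders are exactly determined, there is no room for hidden extensions'' does not follow. Concretely, a hidden extension would read $v_i(j)\cdot v_k(\ell)=v_i(j+2^{k-i}\ell)\cdot v_k(0)+\rho^m y$ with $m\geq 1$ and $y$ in the appropriate tridegree, and torsion orders alone do not rule this out: with $i\geq k$ both sides and any candidate correction are annihilated by $\rho^{2^{k+1}-1}$, so multiplying by powers of $\rho$ yields no contradiction. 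The paper treats this as a separate, substantive step (Theorem~\ref{thm:Extensions}, via Lemmata~\ref{lem:k=i} and \ref{lem:m=0}): one shows by analyzing topological degree and motivic weight that every nonzero class in the tridegree of $v_i(j)\cdot v_k(\ell)$ has the form $\rho^m v_r(s)\cdot v_q(t)$, and that the degree equations then force $r=i$, $q=k$, $m=0$, so the tridegree contains a unique nonzero class and no correction term can exist. Nor can you evade this by ``multiplying the explicit cobar representatives'': the monomials $\tau^{2^{i+1}j}[\tau_i]$ are not cocycles (their failure to be cocycles is exactly what drives the Bockstein differentials), and the $\rho$-divisible corrections needed to complete them to honest cocycles are precisely where hidden extensions could hide. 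Your proof needs this degree-counting step, or an equivalent, to be complete.
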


While the statement of the theorem is complicated, the method of computation is less so. We recall from Proposition~\ref{prop:BocksteinSetup} that the $E_1$-term of the Bockstein spectral sequence is
\[
E_1=\Ext_{E(n)^\C}(\M_2^\C,\M_2^\C)[\rho]=\F_2[\rho,\tau, v_0,\dots,v_n].
\]
Proposition~\ref{prop:PermCycles} guarantees that the classes $v_0$ through $v_n$ are all permanent cycles, and by construction, the class $\rho$ is as well. We therefore need only understand the differentials on $\tau$.

\begin{thm}\label{thm:E(n)Diffs}\mbox{}
\begin{enumerate}
\item For $i\leq n$, $E_{2^{i}}=\dots=E_{2^{i+1}-1}$ and
\[
E_{2^{i+1}-1}=\F_2[\rho, \tau^{2^{i}}\!, v_{j}(k), v_i,\dots v_n | 0\leq j\leq i-1, 0\leq k]/ \rho^{2^{j+1}-1}v_j(k),
\]
subject to two additional families of relations:
\[
v_{m}(r)\cdot v_{j}(\ell)=v_{m}(r+2^{j-m}\ell)\cdot v_{j}(0),
\]
if $m\leq j$, and if $r\geq 2^{i-m}$\!,
\[
v_{m}(r)=\tau^{2^{i+1}}v_{m}(r-2^{i-m}).
\]

\item The classes $v_{m}(r)$ are represented on $E_1$ by $\tau^{2^{m+1}r}v_m$ and are permanent cycles.

\item The $d_{2^{i+1}-1}$-differential is determined by
\[
d_{2^{i+1}-1}(\tau^{2^i})=\rho^{2^{i+1}-1}v_i.
\]
\end{enumerate}
\end{thm}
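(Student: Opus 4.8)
The plan is to prove Theorem~\ref{thm:E(n)Diffs} by induction on $i$, running the Bockstein spectral sequence one ``band'' at a time. The base case and each inductive step have the same shape: assuming the description of the $E_{2^i}$-page, I first compute the relevant differential, then use Lemma~\ref{lem:BocksteinCycles} to harvest all the new permanent cycles it forces, and finally read off the algebra structure of the next page. Since parts (1), (2), and (3) are logically intertwined across the induction, I would prove them simultaneously rather than as three separate statements.

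The heart of the argument is establishing the differential in part (3), namely $d_{2^{i+1}-1}(\tau^{2^i})=\rho^{2^{i+1}-1}v_i$. The cleanest route is to compute directly in the cobar complex. We have $\eta_R(\tau)=\tau+\rho\tau_0$, so the cobar differential records $d(\tau)=\rho[\tau_0]$, which already gives $d_1(\tau)=\rho v_0$, handling the case $i=0$. For larger $i$ the class $\tau^{2^i}$ is a cycle on earlier pages because $d(\tau^{2^i})=(\eta_R(\tau)^{2^i}-\tau^{2^i})$ and, by freshman's-dream squaring over $\F_2$ together with the relation $\tau_j^2=\rho\tau_{j+1}$ in $E(n)$, the lowest-order surviving term is proportional to $\rho^{2^{i+1}-1}[\tau_i]=\rho^{2^{i+1}-1}v_i$. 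The key calculation is thus to track how the relation $\tau_j^2-\rho\tau_{j+1}$ iterates under squaring: each squaring trades a factor of $\tau_j^2$ for $\rho\tau_{j+1}$, and after $i$ squarings one collects exactly $2^{i+1}-1$ factors of $\rho$ against the single class $v_i=[\tau_i]$. I expect this bookkeeping to be the main obstacle, since one must argue that no lower filtration terms survive to give an earlier differential, which is precisely what the inductive description of $E_{2^i}$ (in particular the relation $\tau^{2^i}=\tau^{2^{i+1}/2}$ being a new polynomial generator at that stage) guarantees.

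Once the differential is in hand, part (2) follows by applying Lemma~\ref{lem:BocksteinCycles} with $b=\rho$, $x=\tau^{2^i}$, $r=2^{i+1}-1$, and $a=v_i$: every element of $v_i\cdot E_{2^{i+1}}$ is a permanent cycle. Translating this back through the representative $\tau^{2^{m+1}r}v_m$ produces precisely the classes $v_m(r)$ and shows they are permanent cycles, while also forcing the torsion relation $\rho^{2^{m+1}-1}v_m(r)=0$. The two families of relations are then structural consequences: the multiplicative relation $v_m(r)v_j(\ell)=v_m(r+2^{j-m}\ell)v_j(0)$ comes from comparing the monomials $\tau^{2^{m+1}r}v_m\cdot\tau^{2^{j+1}\ell}v_j$ and noting the total power of $\tau$ is what matters, and the relation $v_m(r)=\tau^{2^{i+1}}v_m(r-2^{i-m})$ records that $\tau^{2^{i+1}}$ has become a polynomial generator on $E_{2^{i+1}}$ after $\tau^{2^i}$ supported its differential.

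Finally, for part (1) I would assemble the above into the stated presentation of $E_{2^{i+1}-1}$, checking that the differential $d_{2^{i+1}-1}$ has been completely accounted for --- the only nonzero differential on this page is on $\tau^{2^i}$ and its $\tau$-multiples, all of which are subsumed by the Leibniz rule and the permanent-cycle calculation --- so that passing to homology simply replaces the generator $\tau^{2^i}$ by $\tau^{2^{i+1}}$ and introduces the new torsion classes $v_i(k)$. The theorem of the section, Theorem~\ref{thm:ExtE(n)}, then follows by taking $i=n$, at which stage $\tau^{2^{n+1}}$ is a permanent cycle (there is no $v_{n+1}$ in $E(n)$ to receive a further differential) and the spectral sequence collapses.
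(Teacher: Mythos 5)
Your proposal is correct and follows the paper's own proof essentially step for step: the same induction on $i$, the same right-unit computation $\eta_R(\tau^{2^i})=\tau^{2^i}+(\rho\tau_0)^{2^i}$ reduced via $\tau_j^2=\rho\tau_{j+1}$ to obtain $d_{2^{i+1}-1}(\tau^{2^i})=\rho^{2^{i+1}-1}v_i$, and the same application of Lemma~\ref{lem:BocksteinCycles} to produce the permanent cycles $v_i(j)$ and assemble the next page. The only cosmetic difference is that the paper makes the ``no other new cycles'' step explicit by noting that the $\rho^{2^{i+1}-1}$-torsion lies in the ideal generated by the $v_j(k)$ with $j<i$, whose generators are closed under $\tau^{2^i}$-multiplication, whereas you leave that bookkeeping implicit.
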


\begin{proof}
We prove this be induction on $i$. The base case of $i=0$ is the earlier analysis of the Bockstein $E_1$-term, and we already determined that $\tau$ is the only generator that is not a permanent cycle. We begin by showing the third part of the theorem. The first and second parts will follow easily from this and Lemma~\ref{lem:BocksteinCycles}.

The $d_k$-differential on $\tau^j$ is computed by reducing $\eta_L(\tau^j)-\eta_R(\tau^j)$ modulo $\rho^{k+1}$. The units are ring homomorphisms, so
\[
\eta_R\big(\tau^{2^i}\big)=\big(\eta_R(\tau)\big)^{2^i}=\tau^{2^i}+(\rho\tau_0)^{2^i}.
\]
Since in $E(n)$, $\tau_i^2=\rho\tau_{i+1}$ for $i<n$ and $\tau_n^2=0$, we conclude immediately that
\[
\eta_R\big(\tau^{2^i}\big)=\begin{cases}
\tau^{2^i}+\rho^{2^{i+1}-1}\tau_i & i\leq n, \\
\tau^{2^i} & \text{otherwise.}
\end{cases}
\]
The difference between this and the left unit gives the desired differential:
\[
\tau^{2^i}\xrightarrow{d_j} \begin{cases}
\rho^{2^{i+1}-1}v_i & i\leq n, j=2^{i+1}-1, \\
0 & \text{ otherwise.}
\end{cases}
\]

We can now complete our inductive argument. We assume that $E_{2^{i+1}-1}$ is of the stated form. By the above argument, there is a differential $d_{2^{i+1}-1}(\tau^{2^i})=\rho^{2^{i+1}-1}v_i$.

By the induction hypothesis, the classes annihilated by $\rho^{2^{i+1}-1}$ are in the ideal generated by the infinite families of permanent cycles $v_0(j),\dots,v_{i-1}(j)$. The collection of generators for this ideal is closed under $\tau^{2^i}$-multiplication, so the only new $d_{2^{i+1}-1}$-cycle is $\tau^{2^{i+1}}$. We remark here that this is the reason for our decision to include the redundant generators in our list of permanent cycles: the bookkeeping at this point is simplified.

Let $v_i(j)$ denote the class $\tau^{2^{i+1}j}v_i$. Since $v_i$ was truncated by $d_{2^{i+1}-1}$, Lemma~\ref{lem:BocksteinCycles} shows that for every $j\geq 0$, $v_i(j)$ is a permanent cycle. We therefore conclude that
\[
E_{2^{i+1}}=\F_2[\rho,\tau^{2^{i+1}}\!,v_j(k),v_{i+1},\dots,v_n|0\leq j\leq i, 0\leq k]/\rho^{2^{j+1}-1}v_j(k),
\]
subject additionally to the relations
\[
v_{m}(r)\cdot v_{j}(\ell)=v_{m}(r+2^{j-m}\ell)\cdot v_{j}(0),
\]
if $m\leq j$, and if $r\geq 2^{i-m}$\!,
\[
v_{m}(r)=\tau^{2^{i+1}}v_{m}(r-2^{i-m}).
\]

Since all of the generators with the exception of $\tau^{2^{i+1}}$ are permanent cycles, and since we have directly determined the differentials on $\tau^{2^{i+1}}$\!\!, we conclude that
\[
E_{2^{i+1}}=\dots E_{2^{i+2}-1},
\]
completing the proof of the first part.
\end{proof}

We remark that the ``obvious'' relations for the classes $v_i(j)$ are actually inherited from $E_1$: here we can drop the brackets and we just multiply as we normally do. We can therefore conclude these relations through $E_\infty$.

\begin{cor}
The $E_\infty$-page of the Bockstein spectral sequence for $\Ext_{E(n)}(\M_2,\M_2)$ is
\[
E_\infty=\F_2[\rho,\tau^{2^{n+1}}\!,v_i(j) | 0\leq i\leq n, 0\leq j]/\rho^{2^{i+1}-1}v_i(j),
\]
and subject to
\[
v_i(j)\cdot v_k(\ell)=v_i(j+2^{k-i}\ell)\cdot v_k(0)
\]
whenever $i\leq k$, and
\[
v_i(j)=\tau^{2^{n+1}}v_i(j-2^{n-i})
\]
whenever $j\geq 2^{n-i}$.

The tridegree of the element $v_i(j)$ is $(2^{i+1}-2, 1, 2^i-1-2^{i+1}j)$.
\end{cor}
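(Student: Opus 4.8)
The plan is to obtain the $E_\infty$-page as the terminal stage of the inductive computation in Theorem~\ref{thm:E(n)Diffs} and then verify that the spectral sequence collapses there, before reading off the tridegrees.

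First I would specialize Theorem~\ref{thm:E(n)Diffs} to $i = n$. The last step of the induction produces the page $E_{2^{n+1}}$, whose underlying algebra is $\F_2[\rho, \tau^{2^{n+1}}, v_j(k) \mid 0 \le j \le n,\ 0 \le k]$ modulo the relations $\rho^{2^{j+1}-1} v_j(k)$, together with the multiplicative relations $v_m(r)\,v_j(\ell) = v_m(r + 2^{j-m}\ell)\,v_j(0)$ for $m \le j$ and the truncation relations $v_m(r) = \tau^{2^{n+1}} v_m(r - 2^{n-m})$ for $r \ge 2^{n-m}$. After relabeling $(m,j)$ as $(i,k)$, these are precisely the relations asserted in the corollary.

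The step I expect to carry the real content is the collapse $E_{2^{n+1}} = E_\infty$. Since the spectral sequence is multiplicative, it is enough to check that every algebra generator is a permanent cycle. The class $\rho$ is a permanent cycle by construction, and Theorem~\ref{thm:E(n)Diffs}(2) supplies the same for each $v_i(j)$. The only remaining candidate to support a differential is $\tau^{2^{n+1}}$, and this is the crux: here I would reuse the $\eta_R$ computation from the proof of Theorem~\ref{thm:E(n)Diffs}. Because $\tau_{n+1} \equiv 0$ in $E(n)$, one has $\eta_R(\tau^{2^{n+1}}) = \tau^{2^{n+1}}$, so $\eta_L - \eta_R$ vanishes on $\tau^{2^{n+1}}$ and it supports no nonzero Bockstein differential. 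With all generators permanent cycles, no $d_r$ with $r \ge 2^{n+1}$ can be nonzero; the stated relations (and, by the remark preceding the corollary, the absence of new ones) therefore persist to $E_\infty$.

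Finally I would compute the tridegrees from the $E_1$-representative $v_i(j) = \tau^{2^{i+1}j} v_i$. By Proposition~\ref{prop:BocksteinSetup}, $|v_i| = (2^{i+1}-2,\, 1,\, 2^i - 1)$ in the Adams tridegree $(t-s,\, s,\, u)$, and multiplying by $\tau$ changes neither the cohomological degree $s$ nor the internal degree $t$; hence $v_i(j)$ retains $(2^{i+1}-2,\, 1)$ in its first two coordinates. The one place where a sign error is easy to make is the weight: under the convention $H^{\ast,\ast} = H_{-\ast,-\ast}$, the coefficient $\tau$, of cohomological weight $1$, contributes weight $-1$ in homology, so $\tau^{2^{i+1}j}$ contributes $-2^{i+1}j$ to $u$. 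Adding this to the weight $2^i - 1$ of $v_i$ gives $u = 2^i - 1 - 2^{i+1}j$, and hence the tridegree $(2^{i+1}-2,\, 1,\, 2^i - 1 - 2^{i+1}j)$ claimed in the statement.
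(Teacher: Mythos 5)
Your proposal is correct and follows essentially the same route as the paper: the corollary is read off from the terminal stage $E_{2^{n+1}}$ of the induction in Theorem~\ref{thm:E(n)Diffs}, with the collapse following because all generators are permanent cycles (the vanishing of the differential on $\tau^{2^{n+1}}$ being exactly the ``otherwise'' case of the paper's $\eta_R$ computation, since $\tau_0^{2^{n+1}}=0$ in $E(n)$), and the tridegree computed from the $E_1$-representative $\tau^{2^{i+1}j}v_i$ with the homological weight convention. Your write-up merely makes explicit the collapse argument that the paper leaves implicit.
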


The final issue to tackle is the resolution of hidden extensions. We have a family of multiplicative relations, but these are only {\em{a priori}} true modulo $\rho$. We must show that the last family of relations has no $\rho$-divisible correction factors. Better said, we must show that
\[
v_i(j)\cdot v_k(\ell)-v_i(j+2^{k-i}\ell)\cdot v_k(0)
\]
is zero. While the argument is not difficult (it amounts to elementary $2$-adic number theory), it is somewhat involved. Once this point is resolved, however, we have completed the proof of Theorem~\ref{thm:ExtE(n)}.

\subsection{Non-existence of Hidden Extensions}
\begin{thm}\label{thm:Extensions}
In $\Ext_{E(n)}(\M_2,\M_2)$,
\[
v_i(j)\cdot v_k(\ell)-v_i(j+2^{k-i}\ell)\cdot v_k(0)=0
\] whenever $k\geq i$.
\end{thm}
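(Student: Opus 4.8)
The plan is to compare the two sides of the asserted identity on the $E_\infty$-page and then to show that their difference, which a priori is a class of strictly positive $\rho$-Bockstein filtration sitting in a fixed tridegree, must vanish for degree reasons. Both $v_i(j)v_k(\ell)$ and $v_i(j+2^{k-i}\ell)v_k(0)$ are permanent cycles, so by multiplicativity of the spectral sequence their images in the associated graded $E_\infty$ agree; this is precisely the relation recorded in the $E_\infty$-computation above. Hence the difference
\[
C=v_i(j)v_k(\ell)-v_i(j+2^{k-i}\ell)v_k(0)
\]
has higher $\rho$-filtration, i.e.\ $C$ is $\rho$-divisible, and it suffices to prove $C=0$.

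First I would record tridegrees. Consistency of the differential $d_{2^{i+1}-1}(\tau^{2^i})=\rho^{2^{i+1}-1}v_i$ fixes $|\rho|=(-1,0,-1)$, and we have $|\tau^{2^{n+1}}|=(0,0,-2^{n+1})$ together with $|v_i(j)|=(2^{i+1}-2,1,2^i-1-2^{i+1}j)$ from the corollary. The common tridegree of the two products then has stem $2^{i+1}+2^{k+1}-4$, cohomological degree $2$, and weight $(2^i-1-2^{i+1}j)+(2^k-1-2^{k+1}\ell)$. Since cohomological degree is additive and only the $v_p(q)$ carry positive such degree, any element of $E_\infty$ in this tridegree—in particular $C$—is a sum of monomials $\rho^m\tau^{2^{n+1}a}v_p(q)v_r(t)$ with $p\le r$ and $m\ge1$ (the inequality $m\ge1$ being the $\rho$-divisibility). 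Because the tridegree is fixed, each monomial must realize it independently, so it is enough to show no such monomial is nonzero in $E_\infty$.

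Next I would extract $m$ from the stem: matching stems gives $m=2^{p+1}+2^{r+1}-2^{i+1}-2^{k+1}$. The relation $\rho^{2^{p+1}-1}v_p(q)=0$—binding because $p\le r$—forces $m\le 2^{p+1}-2$ for a nonzero monomial, and combining this with $m\ge1$ is an elementary inequality among powers of two that pins down $r=k$ and $i<p\le k$ (and in particular rules out $i=k$ entirely). The remaining case is then killed by the weight: substituting $r=k$ and $m$ into the weight equation and clearing common terms reduces it to
\[
2^i(1+2j)+2^{k+1}\ell = 2^p+2^{n+1}a+2^{p+1}q+2^{k+1}t.
\]
The left-hand side has $2$-adic valuation exactly $i$, since $2^i(1+2j)$ is an odd multiple of $2^i$ while $2^{k+1}\ell$ has valuation at least $k+1>i$; but every term on the right is divisible by a strictly larger power of two, as each of $p$, $p+1$, $n+1$, $k+1$ exceeds $i$. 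Thus the right-hand side is either zero or has valuation $>i$, contradicting the value and valuation of the (positive) left-hand side. Hence no monomial survives, $C=0$, and the relation holds in $\Ext_{E(n)}(\M_2,\M_2)$.

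The main obstacle is the case analysis in the stem step: one must be careful that for a product $v_p(q)v_r(t)$ the annihilating power of $\rho$ is governed by the \emph{smaller} index, and that the two-power inequalities genuinely force $r=k$ and $i<p\le k$ with no further cases escaping. Once this reduction is in place the $2$-adic valuation count is short and decisive, and the associativity and homotopy-commutativity underlying Lemma~\ref{lem:BocksteinCycles} are exactly what legitimize arguing monomial-by-monomial on the associated graded.
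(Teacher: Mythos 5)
Your proposal is correct and follows essentially the same route as the paper: the paper likewise reduces to showing that any class $\rho^m v_r(s)\cdot v_q(t)$ sharing the tridegree of $v_i(j)\cdot v_k(\ell)$ must have $m=0$, $r=i$, $q=k$ (its Lemmata~\ref{lem:k=i} and~\ref{lem:m=0}, stated there without proof), so that the tridegree contains a unique nonzero class and no $\rho$-divisible correction term can exist. Your only real variation is packaging --- you first observe the difference is $\rho$-divisible via agreement on $E_1$ and then rule out positive-filtration monomials, and in doing so you actually supply the elementary power-of-two and $2$-adic valuation arguments that the paper leaves to the reader.
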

We argue this through a series of lemmata using tridegree considerations, explicitly determining that there is only a single non-zero class in the tridegree of $v_i(j)\cdot v_k(\ell)$. Since this has cohomological degree $2$ and since $\rho$ has cohomological degree $0$, we conclude that anything in this tridegree has the form $\rho^{m}v_r(s)\cdot v_q(t)$. In all that follows, we will assume we have constants $m$, $r$, $s$, $q$, $t$, $i$, $j$, $k$, and $\ell$ such that
\begin{equation}\label{eqn:MultExtension}
\left|\,\rho^{m}v_r(s)\cdot v_q(t)\right|=\left|\,v_i(j)\cdot v_k(\ell)\right|
\end{equation}
holds true. We may additionally assume without loss of generality that $r\geq q$ and $i\geq k$. We present without proof two small lemmas which allow us to conclude this result. The arguments are elementary, requiring only an analysis of the degrees and weights of the elements in question, and we leave them to the reader.

\begin{lem}\label{lem:k=i}
If Equation~\ref{eqn:MultExtension} holds, then $r=i$.
\end{lem}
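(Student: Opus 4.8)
The plan is to collapse the tridegree identity in Equation~\ref{eqn:MultExtension} to the single numerical equation coming from its first (topological stem) coordinate, and then to play that equation off against the requirement that $\rho^{m}v_{r}(s)\cdot v_{q}(t)$ be nonzero. The weights play no role at this stage; I expect them to be needed only in the companion lemma that afterward pins down $q$, $s$, $t$ and $m$.

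First I would record the stems. The stem of $v_{a}(b)$ is $2^{a+1}-2$ irrespective of $b$, while $\rho$ has cohomological degree $0$ and stem $-1$ (as one reads off from the differential $d_{2^{i+1}-1}(\tau^{2^{i}})=\rho^{2^{i+1}-1}v_{i}$ of Theorem~\ref{thm:E(n)Diffs} together with the fact that $\tau$ is weightless of stem $0$), so $\rho^{m}$ contributes $-m$. Matching first coordinates in Equation~\ref{eqn:MultExtension} then gives
\[
2^{r+1}+2^{q+1}-m=2^{i+1}+2^{k+1}.
\]
Note this equation is blind to $s$, $t$, $j$ and $\ell$, since $\tau$-powers contribute nothing to the stem; the whole argument takes place inside this one identity.

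Next I would establish the two inequalities $i\leq r$ and $r\leq i$. Because $m\geq 0$ and $q\leq r$,
\[
2^{i+1}<2^{i+1}+2^{k+1}\leq 2^{r+1}+2^{q+1}\leq 2^{r+2},
\]
so $i\leq r$. For the reverse bound I would invoke nonvanishing: since $\rho^{2^{q+1}-1}$ kills $v_{q}(t)$ by Theorem~\ref{thm:ExtE(n)} and $q\leq r$, the monomial $\rho^{m}v_{r}(s)\cdot v_{q}(t)$ can be nonzero only if $m\leq 2^{q+1}-2$. Substituting this into the identity and using $k\leq i$ yields
\[
2^{r+1}=2^{i+1}+2^{k+1}+m-2^{q+1}\leq 2^{i+1}+2^{k+1}-2<2^{i+2},
\]
so $r\leq i$. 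The two inequalities force $r=i$.

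The hard part is not the estimates, which are routine manipulations of powers of two, but the nonvanishing bound $m\leq 2^{q+1}-2$: one must be certain that the operative torsion relation is the one attached to the smaller subscript $q$ rather than to $r$, so that raising $\rho$ to any power below $2^{q+1}-1$ keeps the class alive. This is immediate from $\rho^{2^{q+1}-1}v_{q}(t)=0$ and $q\leq r$, but it is the genuine input: without it the stem equation admits spurious solutions with $r>i$ in which a large power of $\rho$ compensates, and it is precisely nonvanishing that excludes them.
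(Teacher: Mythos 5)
Your proof is correct, and since the paper omits this argument entirely (leaving Lemmata~\ref{lem:k=i} and \ref{lem:m=0} to the reader as an ``elementary analysis of the degrees and weights''), your argument --- the stem equation $2^{r+1}+2^{q+1}-m=2^{i+1}+2^{k+1}$ squeezed against the nonvanishing bound $m\le 2^{q+1}-2$ coming from $\rho^{2^{q+1}-1}v_q(t)=0$ --- is exactly the kind of argument the paper intends, and you correctly isolate the essential input: nonvanishing is genuinely needed, since tridegree equality alone admits solutions with $r>i$, e.g.\ $\left|\rho^{4}v_2(0)v_0(0)\right|=\left|v_1(0)v_0(1)\right|$, which are excluded only because such monomials vanish by $\rho$-torsion. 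One harmless slip: $\tau$ is not weightless --- it has weight $-1$ in the grading where $\left|v_i(j)\right|=(2^{i+1}-2,1,2^i-1-2^{i+1}j)$ --- but your derivation of the stem of $\rho$ uses only that $\tau$ has stem $0$, so nothing in the argument is affected.
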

%
%

\begin{lem}\label{lem:m=0}
If Equation~\ref{eqn:MultExtension} holds, then $q=k$, $m=0$, and
\[
2^{r+1}s+2^{q+1}t=2^{i+1}j+2^{k+1}\ell.
\]
\end{lem}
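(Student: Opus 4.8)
The plan is to extract the coordinate-by-coordinate content of Equation~\ref{eqn:MultExtension}. The cohomological coordinate is identically $2$ on both sides (each $v_{\bullet}(\bullet)$ contributes $1$ and $\rho$ contributes $0$), which is exactly what allowed us to assume the class has the displayed shape, so no information survives there; the work is in the topological and weight coordinates. I would record the gradings $|\rho|=(-1,0,-1)$ and $|\tau|=(0,0,-1)$ (the homological sign convention $H^{\ast,\ast}=H_{-\ast,-\ast}$), together with $|v_i(j)|=(2^{i+1}-2,\,1,\,2^i-1-2^{i+1}j)$ from the preceding corollary. By Lemma~\ref{lem:k=i} I substitute $r=i$ throughout, and I keep the standing normalizations $i\ge k$ and $r\ge q$, so that $i\ge q$.

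The topological coordinate of Equation~\ref{eqn:MultExtension} then reads $-m+2^{i+1}+2^{q+1}=2^{i+1}+2^{k+1}$, i.e.
\[
m=2^{q+1}-2^{k+1}.
\]
Since $m\ge 0$ this already forces $q\ge k$, and it pins $m$ down completely in terms of $q$ and $k$; combined with $i\ge q$ we now have the chain $k\le q\le i$. Thus everything reduces to proving $q=k$, for then $m=2^{q+1}-2^{k+1}=0$ automatically.

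To get $q=k$ I would feed $m=2^{q+1}-2^{k+1}$ into the weight coordinate. After cancelling the common summand $2^i-2$ and simplifying, the weight equation collapses to
\[
2^{k}-2^{q}=2^{i+1}(s-j)+2^{q+1}t-2^{k+1}\ell .
\]
The decisive step is a $2$-adic valuation comparison. On the right, every term is divisible by $2^{k+1}$, because $i+1\ge k+1$, $q+1\ge k+1$ (from $q\ge k$), and the last term carries $2^{k+1}$ outright; hence the right-hand side is either $0$ or has $\nu_2\ge k+1$. If $q>k$, the left-hand side equals $-2^{k}(2^{q-k}-1)$ with $2^{q-k}-1$ odd, so it is nonzero with $\nu_2=k$, contradicting $\nu_2\ge k+1$. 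Therefore $q=k$, whence $m=0$, the left-hand side of the displayed weight equation vanishes, and we are left with $2^{i+1}s+2^{q+1}t=2^{i+1}j+2^{k+1}\ell$, which is the asserted identity once we recall $r=i$. I expect this valuation comparison to be the only real obstacle: it hinges on having first established the full ordering $k\le q\le i$, since the argument needs each term on the right to be divisible by a strictly higher power of $2$ than the exact valuation of the left.
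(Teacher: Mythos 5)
Your proof is correct, and it takes exactly the approach the paper intends: the paper explicitly omits the proofs of Lemmata~\ref{lem:k=i} and \ref{lem:m=0}, describing them as elementary arguments ``requiring only an analysis of the degrees and weights,'' which is precisely your coordinate-by-coordinate comparison. Your grading assignments $|\rho|=(-1,0,-1)$ and $|\tau|=(0,0,-1)$ are the correct homological conventions (one can confirm them by checking that $d_{2^{i+1}-1}(\tau^{2^i})=\rho^{2^{i+1}-1}v_i$ preserves internal degree and weight), the topological-degree equation does yield $m=2^{q+1}-2^{k+1}$ and hence $q\ge k$, and your $2$-adic valuation step correctly invokes both the standing assumption $i\ge k$ and the derived inequality $q\ge k$ to force $q=k$, $m=0$, and the stated exponent identity.
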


\begin{proof}[Proof of Theorem~\ref{thm:Extensions}]
Lemmata~\ref{lem:k=i} and \ref{lem:m=0} show that there is a unique class in the tridegree under consideration. We therefore conclude that the multiplication in the associated graded is the correct one.
\end{proof}

We close the section with a picture of $\Ext_{E(1)}(\M_2,\M_2)$. This has the advantage of simultaneously being non-trivial (unlike $\Ext_{E(0)}(\M_2,\M_2)$) and small enough to easily draw (unlike $\Ext_{E(2)}(\M_2,\M_2)$ and beyond). This computation is also used in our subsequent computations. In the picture, the horizontal axis shows $t-s$ while the vertical axis gives $s$. A black dot represents $\F_2[\tau^4]$.

\begin{figure}[ht]
\includegraphics{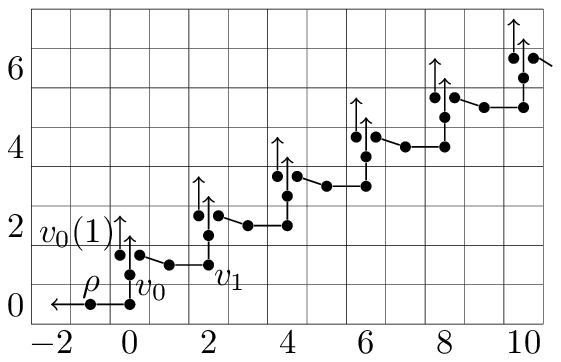}
\caption{$\Ext_{E(1)}(\M_2,\M_2)$}
\label{fig:ExtE(1)}
\end{figure}

\section{$\Ext$ over $\A(1)$}\label{sec:ExtA(1)}

Pictures are indispensable in the computation of $\Ext_{\A(1)}$. Since our spectral sequences are quadruply graded, we must choose two gradings for our axes. We will draw our figures like Adams charts: the horizontal axis is the topological degree $t-s$ and the vertical axis is the cohomological degree $s$. We suppress explicit mention of both the motivic weight and the Bockstein degree.

By construction, the target of a $d_r$-differential is divisibly by $\rho^r$. We will use a solid, horizontal line to denote multiplications by $\rho$, so the only possible targets for a $d_r$-differentials are classes which are followed on the right by exactly $r$ solid lines (thus the Bockstein degree is implicitly reflected in the picture). Moreover, the Bockstein differentials preserve the internal degree and the weight while increasing the filtration by $1$. This means that in our pictures, the Bockstein differentials appear as Adams $d_1$-differentials. These two observations about the structure and targets of differentials greatly simplifies the story.

Given the plethora of elements in all of the figures, we also adopt a convention: any class which is $\rho$-torsion is not drawn on following pages. This corresponds to running the $\rho$-Bockstein spectral sequence with $\rho$ inverted, uncluttering the picture. The algebraic machinery of the spectral sequence keeps track of the $\rho$-torsion elements for us.

In all of the pictures that follow, and open circle represents a polynomial algebra on $\rho$ (times whatever is linked to the open circle by horizontal lines). Black dots are polynomial algebras in $\tau$, and stars are classes which are simple $\tau$-torsion. Circled classes are algebra generators.

\subsection{The $E_1$-page}
We present a picture of the $E_1$-page in Figure~\ref{fig:ExtE1R}.

\begin{figure}[h]
\includegraphics{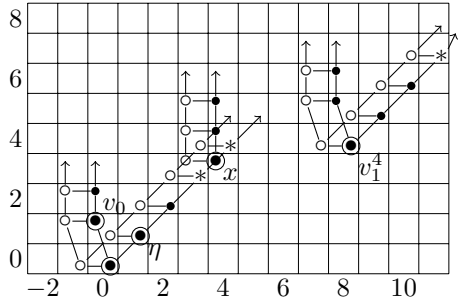}
\caption{The $\rho$-Bockstein $E_1$-Page}
\label{fig:ExtE1R}
\end{figure}

\begin{prop}
The $d_1$-differential is determined by
\[
d_1(\tau)=v_0\rho.
\]
\end{prop}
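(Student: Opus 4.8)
The plan is to compute the $d_1$-differential directly from the cobar/Bockstein construction, exactly as in the proof of Theorem~\ref{thm:E(n)Diffs} but now for the larger Hopf algebroid $\A(1)$. The $d_1$-differential on a class is obtained by reducing $\eta_L - \eta_R$ modulo $\rho^2$; equivalently, one reads off the degree-one-in-$\rho$ part of the deviation of the right unit from the left unit. Since the only $E_1$-generator that is not already a permanent cycle (by Proposition~\ref{prop:PermCycles}) is $\tau$, it suffices to evaluate $\eta_R(\tau)$.

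First I would invoke the structure of $\A(1)$ from the Definition. The right unit is given by $\eta_R(\tau) = \tau + \tau_0\rho$, while the left unit is the canonical inclusion, so $\eta_L(\tau) = \tau$. Hence
\[
\eta_R(\tau) - \eta_L(\tau) = \tau_0 \rho .
\]
Reducing modulo $\rho^2$ leaves exactly $\tau_0\rho$, which is already $\rho$-linear, confirming that the first nontrivial differential is a $d_1$. Next I would identify the cobar class $[\tau_0]$ with the $\Ext$-generator. Under the convention that $v_0$ has tridegree $(0,1,0)$ and is represented in the cobar complex by the classical primitive $\tau_0$ (this is the motivic lift of the usual $v_0 = [\xi_0]$-type representative, being careful, as the text warns, to distinguish $\xi_1$ from $\tau_0^2$), the class $[\tau_0]$ in $\Ext^1$ is precisely $v_0$. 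Therefore the $d_1$-differential sends $\tau$ to $v_0\rho$, as claimed.

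The one point requiring genuine care — and the step I expect to be the main obstacle — is verifying that $[\tau_0]$ really represents $v_0$ and not some other $\Ext^1$-class, i.e.\ pinning down the correct cobar representative in the motivic setting. In the complex case $\Ext_{\A(1)^\C}(\M_2^\C,\M_2^\C) \cong \M_2^\C[v_0,\eta,x,v_1^4]/(\dots)$ from Proposition~\ref{prop:BocksteinSetup}, and I would match tridegrees: $\tau_0$ sits in bidegree $(1,1)$, contributing cohomological degree $1$, and after passing to $\Ext$ the class $[\tau_0]$ lands in the tridegree of $v_0$. The subtlety, flagged already in the discussion following Proposition~\ref{prop:BocksteinSetup}, is that the naive classical representative involves choices that become inequivalent motivically once $\tau$ and $\rho$ are present, so one must check that no $\tau$- or $\rho$-correction term intrudes into the leading cobar representative of $v_0$. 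Once this identification is secured, the differential formula is immediate from $\eta_R(\tau) = \tau + \tau_0\rho$, and the proof is complete.
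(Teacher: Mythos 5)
Your proposal is correct and follows essentially the same route as the paper: compute $\eta_R(\tau)-\eta_L(\tau)=\rho\tau_0$ from the Hopf algebroid structure, reduce modulo $\rho^2$, and use Proposition~\ref{prop:PermCycles} together with degree considerations to see that $\tau$ is the only $E_1$-generator that can support a $d_1$. Your additional care in identifying the cobar class $[\tau_0]$ with $v_0$ is a detail the paper leaves implicit, but it is the standard identification and does not constitute a different method.
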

\begin{proof}
The $d_1$-differential is given by the difference of the left and right units on $\tau$ and reducing modulo $\rho^2$. For degree reasons, and from our earlier observation about permanent cycles, this is the only generator supporting a $d_1$.
\end{proof}

This produces two $d_1$-cycles: $\tau^2$ and $\eta_0=\eta \tau$.

\begin{prop}
The class $\eta_0$ is a permanent cycle.
\end{prop}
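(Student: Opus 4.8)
The plan is to produce a cobar representative for $\eta_0$ that is built entirely out of the generators $\tau_i$ and $\xi_i$, with no occurrence of $\rho$ or the ground-ring class $\tau$, and then invoke Proposition~\ref{prop:PermCycles}. Some argument is genuinely needed here: although $\eta$ is a permanent cycle, $\tau$ is \emph{not} (indeed $d_1(\tau)=v_0\rho$), so there is no formal reason for the product $\eta_0=\eta\tau$ to survive. The crux is exactly the subtlety flagged after Proposition~\ref{prop:BocksteinSetup}, namely that over $\R$ one must distinguish $\xi_1$ from $\tau_0^2$ when lifting the usual representatives.

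The key input is the $i=0$ defining relation of $\A$, which I rewrite as
\[
\tau_0^2=\tau\xi_1+\rho(\tau_1+\tau_0\xi_1).
\]
Modulo $\rho$ this reads $\tau_0^2=\tau\xi_1$, and since $\eta$ is represented by the cocycle $[\xi_1]$, the class $\eta_0=\tau\eta$ is represented modulo $\rho$ by $[\tau_0^2]$. First I would verify that $[\tau_0^2]$ is an honest cocycle in the full cobar complex over $\R$: the coproduct on $\tau_0$ is the classical one, so $\tau_0$ is primitive, hence so is $\tau_0^2$ in characteristic two, and therefore $\bar\psi(\tau_0^2)=0$. Thus $[\tau_0^2]$ is a cobar representative of $\eta_0$ involving only the generator $\tau_0$, so Proposition~\ref{prop:PermCycles} applies verbatim and $\eta_0$ is a permanent cycle. (As a consistency check one also sees that $d_1(\eta_0)=\eta\,d_1(\tau)=v_0\eta\,\rho=0$ since $v_0\eta=0$, so $\eta_0$ is at least a $d_1$-cycle, as already noted.)

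The one step demanding care --- and the main obstacle --- is justifying that $[\tau_0^2]$, and not the naive guess $[\tau\xi_1]$, is the correct representative. Because $\M_2$ is not central, scalar multiplication of a cocycle by $\tau$ need not preserve cocycles ($\eta_R(\tau)=\tau+\rho\tau_0\neq\eta_L(\tau)$), so $[\tau\xi_1]$ is in general not a cocycle over $\R$; the displayed relation is precisely the bookkeeping recording how the correction term $\rho(\tau_1+\tau_0\xi_1)$ repairs this and produces the genuine cocycle $[\tau_0^2]$. As an independent cross-check avoiding cobar representatives altogether, one can compute $\eta_R(\tau^2)-\eta_L(\tau^2)=(\tau+\rho\tau_0)^2-\tau^2=\rho^2\tau_0^2$, which gives the differential $d_2(\tau^2)=\rho^2\eta_0$; Lemma~\ref{lem:BocksteinCycles} with $b=\rho$, $r=2$, and $a=\eta_0$ then shows every element of $\eta_0\cdot E_3$, in particular $\eta_0$ itself, is a permanent cycle, recovering the result.
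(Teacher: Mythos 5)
Your proof is correct and takes essentially the same route as the paper: the paper's own argument is exactly that $\eta_0$ is represented by $\tau_0^2$ in the cobar complex, and that primitivity of $\tau_0$ (hence of $\tau_0^2$ in characteristic two) makes this a cocycle involving only the $\tau_i$, so Proposition~\ref{prop:PermCycles} applies. Your additional bookkeeping with the relation $\tau_0^2=\tau\xi_1+\rho(\tau_1+\tau_0\xi_1)$ and the cross-check via $d_2(\tau^2)=\rho^2\eta_0$ and Lemma~\ref{lem:BocksteinCycles} are sound elaborations of points the paper leaves implicit.
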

\begin{proof}
The class $\eta_0$ is represented by $\tau_0^2$ in the cobar complex. Since $\tau_0$ is primitive, so is this class, and we conclude that it is a permanent cycle.
\end{proof}

We pause here to identify some extensions that increase the Bockstein filtration by $1$. These arise from Massey products built out of $\tau$. From $E_2$ on, the class $\tau$ no longer makes sense, so multiplication by it is ill-defined. However, many of the $\tau$ multiples of classes persist, and we can represent these as a Massey product:
\[
\tau\cdot(-)=\toda{\rho, v_0, -}.
\]
In particular, we learn that the class $\eta_0$ is the bracket $\toda{\rho, v_0, \eta}$ (this is also immediate from the form of $\eta_0$).
\begin{prop}\label{prop:HiddenExtensions}
We have hidden multiplicative extensions
\[
v_0\eta_0=\rho\eta\eta_0
\]
and
\[
\eta\eta_0^2=\rho x.
\]
\end{prop}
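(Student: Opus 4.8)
The plan is to establish both relations by direct computation in the cobar complex for the Hopf algebroid $\A(1)$, exploiting that each left-hand side vanishes in the associated graded and therefore, if nonzero, must be $\rho$-divisible. Indeed, on $E_1$ one has $v_0\eta_0 = (v_0\eta)\tau = 0$ because $v_0\eta = 0$ in $\Ext_{\A(1)^\C}(\M_2^\C,\M_2^\C)$, and $\eta\eta_0^2 = \tau^2\eta^3 = 0$ because $\tau\eta^3 = 0$ there; so both products are hidden. The entire effect is produced by the noncentrality of $\M_2$, concretely by $\eta_R(\tau) = \tau + \rho\tau_0$: were the ground ring central every cancellation below would be exact and the products would vanish, so the correction terms arising from $\eta_R$ carry all the content. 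The Massey-product shorthand $\tau\cdot(-) = \toda{\rho, v_0, -}$, under which $\eta_0 = \toda{\rho, v_0, \eta}$, records this mechanism, but since those brackets live only in the spectral sequence I will argue on honest cochains.

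For the first relation I will use the representatives $v_0 = [\tau_0]$, $\eta = [\xi_1]$, and $\eta_0 = [\tau_0^2]$, so that $v_0\eta_0 = [\tau_0 \mid \tau_0^2]$. First I expand the second factor by the defining relation $\tau_0^2 = \tau\xi_1 + \rho\tau_1 + \rho\tau_0\xi_1$. Then I slide the coefficient $\tau$ across the bar using $[\alpha \mid \eta_L(\tau)\beta] = [\alpha\,\eta_R(\tau) \mid \beta]$ with $\eta_R(\tau) = \tau + \rho\tau_0$; the summand $\rho\tau_0$ contributes exactly the correction $\rho[\tau_0^2 \mid \xi_1] = \rho\eta_0\eta$. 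Finally I check that all surviving $\rho$-free and excess terms assemble into the coboundary of $\tau([\tau_1] + [\tau_0\xi_1])$, using $[\tau_0 \mid \xi_1] = d([\tau_1] + [\tau_0\xi_1])$ together with the Leibniz rule and $d(\tau) = \rho[\tau_0]$. Passing to cohomology and using graded commutativity yields $v_0\eta_0 = \rho\eta\eta_0$.

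The second relation follows the same template over a longer expression. I will write $\eta\eta_0^2 = [\xi_1 \mid \tau_0^2 \mid \tau_0^2]$, expand both copies of $\tau_0^2$, and slide every $\tau$ to the left by $\eta_R$. Modulo $\rho$ the expression reduces to a representative of $\tau^2\eta^3$, which is a coboundary over $\C$ by the relation $\tau\eta^3 = 0$; lifting this nullhomotopy to $\R$ and combining it with the $\rho$-corrections produced by the slides leaves $\rho$ times a $3$-cocycle $\zeta$ of tridegree $(4,3,2)$. The last step is to identify $\zeta$ with $x$. Since $x$ is the only generator in that tridegree, a degree-and-weight count in the spirit of Lemmas~\ref{lem:k=i} and \ref{lem:m=0} shows that $\zeta$ is forced to be $x$ up to a decomposable of lower filtration, and multiplying the first relation by $\eta_0$ to get $v_0\eta_0^2 = \rho\eta\eta_0^2$ provides an independent consistency check.

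I expect the principal obstacle to be the bookkeeping of the Hopf algebroid module structure: each time a $\tau$ is moved past a bar it must be replaced by $\eta_R(\tau) = \tau + \rho\tau_0$, and because it is precisely these replacements --- not the classical terms --- that generate the extensions, no term may be discarded or mis-ordered. For the second relation there is the added difficulty of controlling the length of the cobar expansion and of pinning down a representative for $x$; I anticipate handling the latter by the tridegree-uniqueness argument rather than by writing the representative explicitly.
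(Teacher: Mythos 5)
Your handling of the first relation is correct and complete. With $\tau_0^2=\tau\xi_1+\rho\tau_1+\rho\tau_0\xi_1$ and the slide rule $[\alpha\mid\tau\beta]=[\alpha\,\eta_R(\tau)\mid\beta]$ one gets
\[
[\tau_0\mid\tau_0^2]=\tau[\tau_0\mid\xi_1]+\rho[\tau_0^2\mid\xi_1]+\rho[\tau_0\mid\tau_1]+\rho[\tau_0\mid\tau_0\xi_1],
\]
while the Leibniz rule, $d(\tau)=\rho[\tau_0]$, and $d([\tau_1]+[\tau_0\xi_1])=[\tau_0\mid\xi_1]$ give
\[
d\big(\tau([\tau_1]+[\tau_0\xi_1])\big)=\tau[\tau_0\mid\xi_1]+\rho[\tau_0\mid\tau_1]+\rho[\tau_0\mid\tau_0\xi_1],
\]
so the two expressions differ by exactly $\rho[\tau_0^2\mid\xi_1]$, i.e.\ $v_0\eta_0=\rho\eta\eta_0$. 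This is a different packaging from the paper, which shuffles Massey products, $\eta_0 v_0=\toda{\rho,v_0,\eta}v_0=\rho\toda{v_0,\eta,v_0}=\rho\eta_0\eta$, and relegates the evaluation $\toda{v_0,\eta,v_0}=\eta_0\eta$ to an unrecorded ``elementary computation in the cobar complex''; your null-homotopy $[\tau_1]+[\tau_0\xi_1]$ is precisely the defining-system datum behind that bracket, so the two arguments share their computational core, with yours being self-contained. (One quibble: $\toda{\rho,v_0,\eta}$ is an honest Massey product in $\Ext_{\A(1)}(\M_2,\M_2)$, since $\rho v_0=0$ and $v_0\eta=0$ hold there; it is not merely a spectral-sequence artifact.)

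The second relation is where you have a genuine gap, and it sits exactly where the content of the statement lies. Your plan produces, after expanding, sliding, and lifting the complex null-homotopy of $\tau^2\eta^3$, an identity $\eta\eta_0^2=\rho\zeta$ with $\zeta$ a $3$-cocycle in tridegree $(4,3,2)$, and you then propose to identify $\zeta$ with $x$ by a degree-and-weight count. But such a count can only show that the nonzero elements of $\Ext_{\A(1)}(\M_2,\M_2)$ in that tridegree are the scalar multiples of $x$ --- equivalently, that $\eta\eta_0^2$ is either $0$ or $\rho x$; it cannot rule out that $\zeta$ is a coboundary. This is the asymmetry with Lemmas~\ref{lem:k=i} and~\ref{lem:m=0}: there a tridegree count is used to force a correction term to \emph{vanish} (nothing of higher filtration exists to receive it), whereas you need to force a product to be \emph{nonzero}, which no counting argument can do. Your consistency check $v_0\eta_0^2=\rho\eta\eta_0^2$ decides nothing either, since it holds trivially when both sides are zero. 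To close the gap you must either carry the cochain computation to the end and verify that $\zeta$ is cohomologous to the $\rho$- and $\tau$-free representative of $x$ (hence is not a coboundary), or argue as the paper does, via the shuffle $\eta_0^2\eta=\toda{\rho,v_0,\eta}\cdot\eta_0\eta=\rho\toda{v_0,\eta,\eta_0\eta}$ together with the evaluation $\toda{v_0,\eta,\eta_0\eta}=x$; in either route it is that final, explicit identification --- not a degree count --- that establishes the nonvanishing.
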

\begin{proof}
These follow from standard shuffling results. The $v_0$-multiplication is given by
\[
\eta_0\cdot v_0=\toda{\rho,v_0,\eta}\cdot v_0=\rho\cdot\toda{v_0,\eta,v_0}=\rho \eta_0\eta,
\]
where the last bracket, the motivic analogue of $\toda{2,\eta,2}=\eta^2$, follows from an elementary computation in the cobar complex.

In a similar vein, the hidden $\eta$-multiplication follows from:
\[
\eta_0^2\eta=\eta_0\cdot(\eta_0\eta)=\toda{\rho, v_0, \eta}\cdot\eta_0\eta=\rho\toda{v_0,\eta, \eta_0\eta}=\rho x.\qedhere
\]
\end{proof}
These relations will be essential for later differentials and cycles. In particular, we conclude immediately that modulo $\rho^2$,
\[
\tau^4\eta^3=\rho\tau^2x.
\]

\subsection{The $E_2$-page}
The $E_2$-term is given by
\begin{multline*}
\F_2[\tau^2, v_0, \eta, \eta_0, x, v_1^4, \rho]/(v_0\rho, v_0\eta, \tau^2\eta^3, \eta x, x^2-v_0^2v_1^4, \eta_0 \eta^2, \eta_0^2-\tau^2\eta^2, \eta_0x, v_0\eta_0).
\end{multline*}
We present a picture in Figure~\ref{fig:ExtE2R}. Here a dot indicates a polynomial algebra on $\tau^2$.

\begin{figure}[h]
\includegraphics{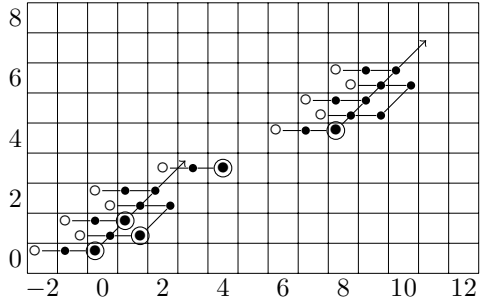}
\caption{The $\rho$-Bockstein $E_2$-Page}
\label{fig:ExtE2R}
\end{figure}

\begin{prop}
The $d_2$-differential is determined by
\[
d_2(\tau^2)=\rho^2\eta_0.
\]
\end{prop}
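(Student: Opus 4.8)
The plan is to compute the $d_2$-differential on $\tau^2$ exactly as the $d_1$-differential was computed, namely by applying $\eta_R - \eta_L$ to a cobar representative and reducing modulo the appropriate power of $\rho$. Since the units are ring homomorphisms and $\eta_R(\tau) = \tau + \tau_0\rho$, I first compute
\[
\eta_R(\tau^2) = (\tau + \tau_0\rho)^2 = \tau^2 + \tau_0^2\rho^2,
\]
using that we are in characteristic $2$ so the cross term $2\tau\tau_0\rho$ vanishes. Subtracting $\eta_L(\tau^2) = \tau^2$ gives $\tau_0^2\rho^2$ as the leading $\rho$-contribution. The key identification is that $\tau_0^2$ is precisely the cobar representative of $\eta_0$, as established in the proposition that $\eta_0$ is represented by $\tau_0^2$. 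Thus modulo $\rho^3$ the difference is $\rho^2\eta_0$, yielding $d_2(\tau^2) = \rho^2\eta_0$.

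First I would confirm that $\tau^2$ genuinely survives to $E_2$ as a $d_1$-cycle, which is already recorded: the previous proposition states that $d_1(\tau) = v_0\rho$ produces exactly the two $d_1$-cycles $\tau^2$ and $\eta_0$, so $\tau^2$ is a legitimate class on the $E_2$-page. Next I would verify that this differential is consistent with the degree bookkeeping, namely that $\rho^2\eta_0$ lies in the correct tridegree to be a $d_2$-target: the internal degree and weight must match those of $\tau^2$ while the cohomological degree increases appropriately, and the target must be exactly $\rho^2$-divisible (reflecting the Bockstein degree $r=2$). This degree check also rules out any competing contributions, confirming that $\tau^2$ is the only generator supporting a $d_2$ for the same degree reasons invoked in the $d_1$ computation.

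The main subtlety, and the step I expect to require the most care, is the clean identification of $\tau_0^2$ with $\eta_0$ rather than with some other class in that tridegree. One must be careful to distinguish $\tau_0^2$ from $\xi_1$, precisely the caveat flagged in the discussion following Proposition~\ref{prop:BocksteinSetup} about lifting the usual representatives; the relation $\tau_0^2 = \rho\tau_1 + \rho\tau_0\xi_1 + \tau\xi_1$ in $\A$ means that modulo $\rho$ one has $\tau_0^2 \equiv \tau\xi_1$, so the representative genuinely computes $\eta_0 = \eta\tau$ and not an unrelated class. Once this identification is secured, the differential follows immediately, and the remaining consistency with the hidden extensions of Proposition~\ref{prop:HiddenExtensions}—in particular the relation $\eta_0^2 - \tau^2\eta^2$ recorded on the $E_2$-page—serves as a useful check that the $d_2$-pattern propagates correctly through the rest of the computation.
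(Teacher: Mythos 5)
Your proposal is correct and takes essentially the same route as the paper: the paper's proof likewise observes that every algebra generator except $\tau^2$ is a permanent cycle and that the differential is immediate from the cobar complex because the units are algebra maps, which is precisely your calculation $\eta_R(\tau^2)-\eta_L(\tau^2)=(\tau+\rho\tau_0)^2-\tau^2=\rho^2\tau_0^2$ combined with the identification of $\tau_0^2$ as the cobar representative of $\eta_0$. Your added care in checking that $\tau_0^2\equiv\tau\xi_1\pmod{\rho}$, so that it really represents $\eta_0=\eta\tau$, just makes explicit what the paper leaves implicit from its earlier proposition on $\eta_0$.
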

\begin{proof}
With the exception of $\tau^2$, all of the algebra generators are permanent cycles. The differential on $\tau^2$ is immediate from the cobar complex: the left and right units are algebra maps.
\end{proof}

We remark that we can find this differential by applying the power operation $Sq^0$:
\[
d_2(Sq^0(\tau))=Sq^0(d_1(\tau))=Sq^0(v_0)Sq^0(\rho)=\tau\eta\rho^2=\eta_0\rho^2.
\]
The two approaches are essential the same; the latter has the possibility of hiding some of the higher filtration classes that arise when we complete a given $d_i$-cycle to an honest cohomology class.

Since $\eta_0$ is $v_0$, $\eta^2$, $\eta\eta_0$ and $x$ torsion (at least modulo $\rho$), we get new cycles: $\tau^4$, $a=v_0\tau^2$, $\eta^2 \tau^2$ (which is $\eta_0^2$), $c=\eta\eta_0\tau^2$, and $b=x\tau^2$. We remark that Lemma~\ref{lem:BocksteinCycles} does not apply to the class $c$, since $c$ is not $\eta_0$-divisible on $E_3$.

\begin{prop}
There is a hidden $\eta$-multiplication:
\[
a\eta=\rho\eta_0^2.
\]
\end{prop}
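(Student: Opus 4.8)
The plan is to reproduce the shuffling arguments of Proposition~\ref{prop:HiddenExtensions}, feeding in the two ingredients already available on this page: the previously established hidden extension $v_0\eta_0=\rho\eta\eta_0$, and the identification of multiplication by $\tau$ with the Massey product $\tau\cdot(-)=\toda{\rho,v_0,-}$, which is legitimate on any class annihilated by $v_0$ because $d_1(\tau)=\rho v_0$. The moral computation is the string
\[
a\eta=v_0\tau^2\eta=v_0\tau\cdot(\tau\eta)=v_0\tau\cdot\eta_0=\tau\cdot(v_0\eta_0)=\tau\cdot(\rho\eta\eta_0)=\rho\cdot(\tau\eta)\cdot\eta_0=\rho\eta_0^2,
\]
where $\eta_0=\tau\eta$ is used at both ends and the central equality is the known extension $v_0\eta_0=\rho\eta\eta_0$. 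Since $\tau$ is not a class from $E_2$ onward, every appearance of ``$\tau\cdot$'' must be replaced by the corresponding bracket, and the entire content of the proof is checking that this replacement is legitimate.

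First I would pull the factor of $\rho$ out of the computation as early as possible, rewriting the middle of the string as $\rho\cdot\tau\cdot(\eta\eta_0)$. The class $\eta\eta_0$ is annihilated by $v_0$ (indeed $v_0\eta=0$), so $\tau\cdot(\eta\eta_0)=\toda{\rho,v_0,\eta\eta_0}$ is defined, and the standard containment $\toda{\rho,v_0,\eta}\cdot\eta_0\subseteq\toda{\rho,v_0,\eta\eta_0}$ together with the known value $\toda{\rho,v_0,\eta}=\eta_0$ identifies this bracket with $\eta_0\cdot\eta_0=\eta_0^2$. Multiplying back by the extracted $\rho$ yields $a\eta=\rho\eta_0^2$. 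The preliminary steps $a\eta=\tau\cdot(v_0\eta_0)$ and the substitution $v_0\eta_0=\rho\eta\eta_0$ are justified the same way: both $v_0\eta_0$ and $\rho\eta\eta_0$ are killed by $v_0$ (using $v_0\rho=0$ and $v_0^2\eta_0=v_0\rho\eta\eta_0=0$), so their images under $\toda{\rho,v_0,-}$ agree.

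The step I expect to be the real obstacle is the filtration bookkeeping in exactly these brackets. Because $v_0\eta_0$ is already $\rho$-divisible, a careless application of the $\tau$-Massey product — forming $\toda{\rho,v_0,v_0\eta_0}$ before extracting $\rho$ — threatens to push the answer into Bockstein filtration $2$ rather than $1$, which is precisely why the factor of $\rho$ must be separated off before any bracket is formed. Making this rigorous requires controlling the indeterminacies $\rho\cdot\Ext_{\A}(\M_2,\M_2)+(-)\cdot v_0$ of the brackets involved; as in Section~\ref{sec:ExtE(n)}, I would settle this by a direct examination of the relevant tridegrees, verifying that no competing class of the correct filtration can contribute. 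As an independent check I would lift $a$ and $\eta=[\xi_1]$ to the cobar complex and multiply directly, taking care to use an honest cocycle representative for $a$ rather than the naive $v_0\tau^2$ (whose differential $\rho^2 v_0\eta_0$ is nonzero although $\rho^3$-divisible), and using $\eta_R(\tau^2)=\tau^2+\rho^2\tau_0^2$ and the relation $\tau_0^2=\tau\xi_1$ modulo $\rho$ to recognize the product; the single surviving power of $\rho$ emerges only once this correction is tracked, so the cobar route is really the bracket manipulation above in filtration-aware disguise.
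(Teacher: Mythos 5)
Your heuristic string is the right picture, but the written argument has a genuine gap at its only substantive step: the claim $a\eta=\tau\cdot(v_0\eta_0)$, that is, $a\eta\in\toda{\rho,v_0,v_0\eta_0}=\rho\toda{\rho,v_0,\eta\eta_0}$ modulo indeterminacy. The justification you give for this --- that $v_0\eta_0$ and $\rho\eta\eta_0$ are annihilated by $v_0$, so their images under $\toda{\rho,v_0,-}$ agree --- shows only that these two brackets are defined and coincide with each other; it does not connect either bracket to the product $a\cdot\eta$, and that connection is essentially the proposition to be proved. Moreover, the gap cannot be filled within the framework you set up. Your string secretly factors $a$ as $\tau\cdot(\tau v_0)$, but $\tau v_0$ is not a class in $\Ext_{\A(1)}(\M_2,\M_2)$: it is not even a $d_1$-cycle in the Bockstein spectral sequence, since $d_1(\tau v_0)=\rho v_0^2\neq 0$. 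Hence $a$ is not reachable by iterating the first-page bracket $\toda{\rho,v_0,-}$, and no amount of indeterminacy bookkeeping will produce the equality $a\eta=\rho\toda{\rho,v_0,\eta\eta_0}$ from $d_1(\tau)=\rho v_0$ and Proposition~\ref{prop:HiddenExtensions} alone.

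The missing idea, which is how the paper proceeds, is to use the second page rather than the first: since $d_2(\tau^2)=\rho^2\eta_0$, multiplication by $\tau^2$ is the bracket $\toda{\eta_0,\rho^2,-}$ on $\rho^2$-torsion classes, so $a=v_0\tau^2$ is itself a Massey product, $a=\toda{\eta_0,\rho^2,v_0}=\toda{\eta_0\rho,\rho,v_0}$, and then a single shuffle gives
\[
a\eta=\toda{\eta_0\rho,\rho,v_0}\eta=\eta_0\rho\toda{\rho,v_0,\eta}=\rho\eta_0^2.
\]
Identifying $a$ itself as a bracket via the $d_2$-differential is the essential content, and it is absent from your proposal. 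Your fallback cobar computation is a legitimate alternative in principle --- the paper remarks that with the honest representative $\tau^2\tau_0+\tau\rho\tau_0^2+\rho^2\tau_0^3$ for $a$, multiplication by $\xi_1$ can be rearranged to give $\rho\tau_0^2|\tau_0^2$ --- but as written it is only a sketch, and your parenthetical about the naive representative is off: $d(\tau^2[\tau_0])=\rho^2[\tau_0^2|\tau_0]$ exactly, which is a coboundary (representing $\rho^2v_0\eta_0=0$), not a cochain divisible by $\rho^3$.
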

\begin{proof}
From $E_3$ on, $\tau^2$ multiplication is no longer well defined. Instead, we have a bracket formulation:
\[
\tau^2\cdot(-)=\toda{\eta_0,\rho^2,-}\text{ or }\toda{\rho^2, \eta_0,-}.
\]
Since $v_0$ is $\rho$-torsion, we use the former, rearranging slightly to see that
\[
a=\toda{\eta_0\rho, \rho, v_0}.
\]
If we multiply by $\eta$, then we get
\[
a\eta=\toda{\eta_0\rho,\rho,v_0}\eta=\eta_0\rho\toda{\rho,v_0,\eta}=\rho\eta_0^2,
\]
where the last equality follows from the definition of $\eta_0$.
\end{proof}
We remark that as with most statements, it is also easy to give a justification of this using explicit cycles in the cobar complex. The class $\tau^2v_0$ is represented by
\[
\tau^2\tau_0+\tau\rho\tau_0^2+\rho^2\tau_0^3,
\]
and multiplication by $\xi_1$ can be easily rearranged to give $\rho\tau_0^2|\tau_0^2$.

Just as before, we can work out additional hidden extensions:
\[
v_0(\tau^2\eta^2)=\toda{\rho^2,\eta_0, \eta^2}v_0=\rho^2\toda{\eta_0, \eta^2, v_0}=\rho^2 x.
\]
This is consistent with the previous result:
\[
v_0(\eta_0^2)=(\rho\eta\eta_0)\cdot\eta_0=\rho\cdot(\rho x).
\]

These relations actually immediately tell us that we are not finished. The classes $v_0$ and $\eta_0$ are $\rho$-torsion while $x$ is not.

\subsection{The $E_3$-page}
We have the following $E_3$-page:
\begin{multline*}
\F_2[\tau^4, \rho, v_0, a, \eta, \eta_0, c, x, b, v_1^4]/(
v_0\rho, v_0\eta, v_0\eta_0, v_0c,
a\rho, a\eta, a\eta_0, ac,
\eta x, \eta_0x, \eta b, \eta c, \eta \eta_0^2,\\ \tau^4\eta^3,
\eta_0b, \eta_0 c, \eta_0\eta^2, \eta_0^3,
cx, cb, x^2-v_0^2v_1^4, xb-av_0v_1^4, b^2-a^2v_1^4,
v_0^2\tau^4-a^2, \tau^4v_0x-ab).
\end{multline*}

This is presented in Figure~\ref{fig:ExtE3R}. Here black dots are polynomial algebras on $\tau^4$.
\begin{figure}[h]
\includegraphics{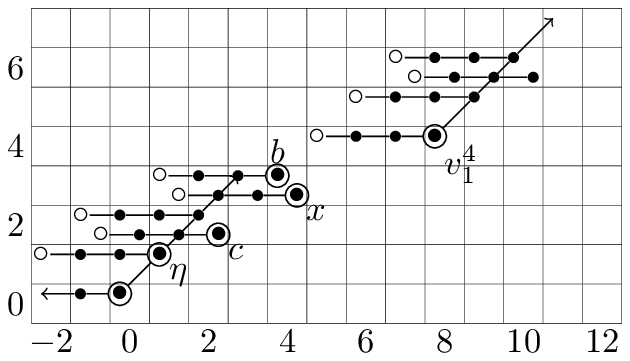}
\caption{The $\rho$-Bockstein $E_3$-Page}
\label{fig:ExtE3R}
\end{figure}

\begin{prop}
There is a $d_3$-differential
\[
d_3(c)=\rho^3x.
\]
\end{prop}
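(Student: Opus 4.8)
The plan is to compute this differential directly in the cobar complex, leveraging the already-established differential $d_2(\tau^2)=\rho^2\eta_0$ together with the hidden extension $\eta\eta_0^2=\rho x$ from Proposition~\ref{prop:HiddenExtensions}. The governing mechanism is that the formal $d_2$ on $c=\eta\eta_0\tau^2$ is $\rho^2\eta\eta_0^2$, which vanishes on the associated graded only because $\eta\eta_0^2$ is itself $\rho$-divisible; the differential is therefore not killed but deferred by one Bockstein filtration and surfaces as a $d_3$.

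First I would fix cobar cocycle representatives $\eta=[\xi_1]$, $\eta_0=[\tau_0^2]$ (both are cocycles since $\xi_1$ and $\tau_0^2$ are primitive), and $\widetilde x$ for $x$. I represent $c$ by $\widetilde c=\tau^2\cdot[\xi_1|\tau_0^2]$, placing the scalar $\tau^2$ on the left via $\eta_L$. Since $\eta_R(\tau)=\tau+\rho\tau_0$ gives $\eta_R(\tau^2)=\tau^2+\rho^2\tau_0^2$, the $0$-cochain $\tau^2$ has $d(\tau^2)=\rho^2[\tau_0^2]$ \emph{exactly}. As $[\xi_1|\tau_0^2]$ is a cocycle, the Leibniz rule produces no correction terms:
\[
d(\widetilde c)=\rho^2\,[\tau_0^2|\xi_1|\tau_0^2],
\]
and the cocycle $Z:=[\tau_0^2|\xi_1|\tau_0^2]$ represents $\eta\eta_0^2$. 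Reducing modulo $\rho$ recovers the fact that $d_2(c)=\eta\eta_0^2=0$ on the associated graded, so $c$ does survive to $E_3$, as claimed earlier.

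The crux is to feed in the hidden extension. Because $\eta\eta_0^2=\rho x$ in $\Ext_{\A}(\M_2,\M_2)$, the cocycle $Z$ is cohomologous to $\rho\widetilde x$, i.e. $Z=\rho\widetilde x+d(w)$ for some cochain $w$. Substituting, and using that $\rho$ is central ($\eta_R(\rho)=\rho$),
\[
d(\widetilde c)=\rho^2\bigl(\rho\widetilde x+d(w)\bigr)=\rho^3\widetilde x+d(\rho^2 w).
\]
Passing to the improved representative $\widetilde c'=\widetilde c-\rho^2 w$, which differs from $\widetilde c$ only in higher $\rho$-filtration, gives $d(\widetilde c')=\rho^3\widetilde x$. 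By the definition of the Bockstein differential this is exactly $d_3(c)=\rho^3 x$. A tridegree check confirms the target: $c$ lies in tridegree $(2,2,5)$, and since $d_3$ fixes the internal degree and weight while raising the cohomological degree by one, its target lies in $(1,3,5)$, which is precisely the tridegree of $\rho^3 x$.

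The main obstacle is conceptual rather than computational: one must resist reading $\rho^2\eta\eta_0^2$ as zero and instead retain the \emph{exact} cochain, recognizing that the vanishing of $\eta\eta_0^2$ on the associated graded is off by one power of $\rho$. The hidden extension $\eta\eta_0^2=\rho x$ is exactly what quantifies this, and supplying it is the heart of the argument; the remaining manipulations (the exactness of $d(\tau^2)$ and the absorption of $d(\rho^2 w)$ into the representative) are routine. Equivalently, one could phrase the computation through the bracket description $\tau^2\cdot(-)=\toda{\rho^2,\eta_0,-}$ valid from $E_3$ on, writing $c=\toda{\rho^2,\eta_0,\eta\eta_0}$ and applying the hidden extension inside the bracket; I prefer the cobar route here because it makes the $\rho$-divisibility bookkeeping transparent.
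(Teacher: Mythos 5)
Your proposal is correct and is essentially the paper's own argument: the paper likewise observes that $c$ survived $d_2$ only because $\eta\eta_0^2$ vanishes modulo $\rho$, and then uses the hidden extension $\eta\eta_0^2=\rho x$ from Proposition~\ref{prop:HiddenExtensions} to promote the deferred differential to $d_3(c)=\rho^2\cdot\eta\eta_0^2=\rho^3x$. Your cobar-level bookkeeping (the exact identity $d(\tau^2)=\rho^2[\tau_0^2]$ and the replacement of the representative by $\widetilde c-\rho^2w$) just makes explicit the mechanism the paper invokes implicitly.
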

\begin{proof}
The class $c$ was a $d_2$-cycle because $\eta_0^2\eta$ was zero modulo $\rho$. However, Proposition~\ref{prop:HiddenExtensions} shows that $\eta_0^2\eta=\rho x$. Thus the differential on $c$ actually takes the form
\[
c\mapsto \rho^2\eta_0^2\eta=\rho^3 x.\qedhere
\]
\end{proof}

For degree reasons, there are no more $d_3$-differentials. Additionally, since almost all classes annihilated $c$ (indeed, these are all of the classes annihilated by $\rho^3 x$), we produce no new cycles. This in turn means we do not have new multiplicative relations.

\subsection{The $E_\infty$-page and $\Ext$}
This allows us to produce the $E_4$-page:
\begin{multline*}
\F_2[\tau^4, \rho, v_0, a, \eta, \eta_0, x, b, v_1^4]/(
v_0\rho, v_0\eta, v_0\eta_0,
a\rho, a\eta, a\eta_0,
\eta x, \eta b, \eta \eta_0^2, \tau^4\eta^3,
\eta_0x, \eta_0b,\\ \eta_0\eta^2, \eta_0^3, \rho^3x,
x^2-v_0^2v_1^4, xb-av_0v_1^4, b^2-a^2v_1^4,
v_0^2\tau^4-a^2, \tau^4v_0x-ab).
\end{multline*}

The class $b$ is a permanent cycle for a very simple reason: $\rho b$ is a permanent cycle, so $b$ must also be a permanent cycle (this is an artifact of the fact that higher Bockstein differentials never {\em{further}} truncate classes).

This class is also present in a hidden extension not readily predicted from shuffling considerations.

\begin{prop}
There is a hidden $\eta$-multiplication of the form
\[
\eta\cdot b=\rho^3v_1^4.
\]
\end{prop}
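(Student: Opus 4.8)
The plan is to realize $b$ as a Massey product and thereby reduce the claimed extension to the (non)vanishing of a single bracket. The $d_2$-differential $d_2(\tau^2)=\rho^2\eta_0$ shows that $\rho^2\eta_0=0$ in $\Ext_\A$, and since $x\eta_0=0$ on every page, the permanent cycle $b$ (which is carried by $x\tau^2$) is represented by the three-fold Massey product $\toda{x,\eta_0,\rho^2}$. This is exactly the device already used for the hidden $\eta$-multiplication on $a$: past $E_2$ the symbol $\tau^2$ no longer names a class, and multiplication by it is implemented by the bracket $\toda{-,\rho^2,\eta_0}$ on elements annihilated by $\eta_0$. A quick check confirms that $\toda{x,\eta_0,\rho^2}$ lands in the tridegree $(4,3,0)$ of $b$.

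Next I would juggle. Because $\eta x=0$ and $x\eta_0=0$, both $\toda{x,\eta_0,\rho^2}$ and $\toda{\eta,x,\eta_0}$ are defined, and the standard shuffle $a\toda{c,d,e}=\toda{a,c,d}e$ (valid here since $\eta x=0$ and $\eta_0\rho^2=0$) gives
\[
\eta\cdot b=\eta\cdot\toda{x,\eta_0,\rho^2}=\toda{\eta,x,\eta_0}\cdot\rho^2.
\]
Now I would pin down the receiving group by degree: $\eta b$ lies in tridegree $(5,4,1)$, and an elementary count of the monomials of cohomological degree $4$ in the $E_\infty$-algebra shows that this group is one-dimensional, spanned by $\rho^3 v_1^4$ (nothing else of weight $1$ and topological degree $5$ survives in filtration $4$). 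Hence $\eta b$ is either $0$ or $\rho^3 v_1^4$, and the whole proposition reduces to showing that the bracket $\toda{\eta,x,\eta_0}$ is nonzero.

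This nonvanishing is the part that shuffling cannot detect — both candidate values for $\eta b$ are $\rho$-divisible, so no juggling relation among the classical generators can distinguish them, which is precisely why the extension is not predicted by formal considerations. To settle it I expect to pass to the cobar complex, using the representatives $\eta=[\xi_1]$ and $\eta_0=[\tau_0^2]$ together with an explicit three-cochain for $x$, producing cochains that trivialize $\eta x$ and $x\eta_0$ and then assembling the defining cocycle of $\toda{\eta,x,\eta_0}$. The essential input is the defining relation $\tau_0^2=\rho\tau_1+\rho\tau_0\xi_1+\tau\xi_1$: rewriting the bracket cocycle through this relation is exactly what forces the factor of $\rho$ (mirroring the way the differentials $d_{2^{i+1}-1}(\tau^{2^i})=\rho^{2^{i+1}-1}v_i$ of Theorem~\ref{thm:E(n)Diffs} arise), after which the surviving term is recognized, modulo coboundaries, as $\rho$ times the standard representative of $v_1^4$. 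The main obstacle is this final cobar identification: one must choose a workable representative of $x$ (and of $v_1^4$) and carefully track the $\rho$-corrections coming from $\tau_0^2$ through the bracket, since it is only at this level that the nontrivial coefficient — equivalently, the jump of three in the Bockstein filtration — can actually be verified.
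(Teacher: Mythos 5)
Your reduction is set up correctly: the identification of $b$ with the bracket $\toda{x,\eta_0,\rho^2}$ (the defining conditions $x\eta_0=0$ and $\eta_0\rho^2=0$ do hold), the shuffle $\eta\cdot\toda{x,\eta_0,\rho^2}=\toda{\eta,x,\eta_0}\cdot\rho^2$, and the tridegree count showing that $(5,4,1)$ contains only $\rho^3v_1^4$ are all sound, and that last count is in fact also the final step of the paper's argument. But there is a genuine gap exactly where you flag it: all of this only reduces the proposition to the nonvanishing of $\toda{\eta,x,\eta_0}\cdot\rho^2$, and that nonvanishing is never established. What you offer in its place is a plan for a cobar computation --- choose representatives for $x$ and $v_1^4$, trivialize $\eta x$ and $x\eta_0$, assemble the bracket cocycle, track the $\rho$-corrections coming from $\tau_0^2=\rho\tau_1+\rho\tau_0\xi_1+\tau\xi_1$ --- with no indication of how it comes out, i.e.\ no verification that the resulting cocycle is $\rho^3$ times a representative of $v_1^4$ rather than a coboundary. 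Since $\eta b=0$ is, as you yourself observe, equally consistent with every shuffling relation available, the entire content of the proposition sits inside this unexecuted computation. A secondary point: your bracket manipulations are equalities only up to indeterminacy, so even granting the cobar calculation you would need to check the indeterminacies vanish (they do, by degree counts of the same kind, but it must be said).

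The gap can be closed without any cobar work, and this is the route the paper takes: run the argument in the opposite direction. The quotient map $\A(1)\to E(1)$, via the change-of-rings identification $\Ext_{\A(1)}(\M_2,\A(1)\Box_{E(1)}\F_2)\cong\Ext_{E(1)}(\M_2,\M_2)$, induces a ring map $\Ext_{\A(1)}(\M_2,\M_2)\to\Ext_{E(1)}(\M_2,\M_2)$ which should be read as reduction modulo $\eta$: its kernel consists of $\eta$-divisible classes. This map carries $\rho$ to $\rho$ and $v_1^4$ to $v_1^4$, and by Theorem~\ref{thm:ExtE(n)} we have $\rho^3v_1(j)=0$ in $\Ext_{E(1)}(\M_2,\M_2)$, so $\rho^3v_1^4$ maps to zero there. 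Hence $\rho^3v_1^4$ is $\eta$-divisible in $\Ext_{\A(1)}(\M_2,\M_2)$, and since $b$ is the only class in tridegree $(4,3,0)$, the only possibility is $\eta b=\rho^3v_1^4$. Note the logical direction: instead of computing $\eta b$ and trying to show it is nonzero (the step your method cannot complete without explicit cocycles), one shows that the nonzero class $\rho^3v_1^4$ is forced to be an $\eta$-multiple, and then identifies the only possible source.
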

\begin{proof}
We will prove this by comparing with $\Ext_{E(1)}$. Let $C(\eta)$ denote the subcomodule algebra of $\A(1)$ generated by $\xi_1$. A change-of-rings argument shows that
\[
\Ext_{\A(1)}\big(\M_2,C(\eta)\big)=\Ext_{\A(1)}\big(\M_2,\A(1)\Box_{E(1)}\F_2\big)=\Ext_{E(1)}(\M_2,\M_2),
\]
so we can interpret the effect in $\Ext$ of the canonical quotient map $\A(1)\to E(1)$ as the reduction modulo $\eta$.

The induced map
\[
\Ext_{\A(1)}(\M_2,\M_2)\to \Ext_{E(1)}(\M_2,\M_2)
\]
takes $v_1^4$ to $v_1^4$. By Theorem~\ref{thm:ExtE(n)}, $\rho^3v_1^4=0$ in $\Ext_{E(1)}(\M_2,\M_2)$, so we conclude that the class $\rho^3v_1^4$ must be divisible by $\eta$ in $\Ext_{\A(1)}(\M_2,\M_2)$. The only possible relation is the given one, for tridegree reasons.
\end{proof}

We remark that from this, we get an amusing formula:
\[
\tau^4\eta^4=\rho^4v_1^4.
\]
This is a strange cobar analogue of $\tau_0^4=0$.

\begin{thm}
As a ring
\begin{multline*}
\Ext_{\A(1)^\R}(\M_2,\M_2)=
\F_2[\tau^4, \rho][v_0, a, \eta, \eta_0, x, b, v_1^4]/\\(
v_0\rho, v_0\eta,
a\rho, a\eta_0,
\eta x, \tau^4\eta^3,
\eta_0x, \eta_0b, \eta_0\eta^2, \eta_0^3, \rho^3x,
x^2-v_0^2v_1^4, ab-\tau^4v_0x, \\ xb-av_0v_1^4, b^2-a^2v_1^4,
a^2-v_0^2\tau^4, \eta b-\rho^3v_1^4, v_0\eta_0-\rho\eta\eta_0, \eta \eta_0^2-\rho x, a\eta-\rho\eta_0^2).
\end{multline*}
\end{thm}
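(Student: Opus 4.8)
The plan is to assemble the answer from the $E_\infty$-page of the $\rho$-Bockstein spectral sequence and then lift the associated-graded relations to honest relations in $\Ext_{\A(1)^\R}(\M_2,\M_2)$, correcting for $\rho$-divisible terms. First I would argue that $E_4=E_\infty$ by checking that every algebra generator on the $E_4$-page is a permanent cycle: $\rho$ is a permanent cycle by construction, $v_0$, $\eta$, and $v_1^4$ are permanent cycles by Proposition~\ref{prop:PermCycles}, and $\eta_0$ and $b$ were shown to survive directly on the earlier pages. The remaining generator $\tau^4$ is also a permanent cycle, which one sees from $\eta_R(\tau)=\tau+\rho\tau_0$: squaring twice in characteristic two gives $\eta_R(\tau^4)=\tau^4+\rho^4\tau_0^4$, and in $\A(1)$ one has $\tau_0^4=(\tau_0^2)^2=\rho^2\tau_1^2=0$ because $\xi_1^2=0$ and $\tau_1^2=0$, so $\eta_R(\tau^4)=\eta_L(\tau^4)$. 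The two classes $a$ and $x$ then support no differential for degree reasons, the relevant target tridegrees being empty. Since every generator is a permanent cycle, the Leibniz rule forces all $d_r$ with $r\geq 4$ to vanish and $E_4=E_\infty$.

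Next I would interpret $E_\infty$ as the associated graded of $\Ext_{\A(1)^\R}(\M_2,\M_2)$ for the $\rho$-adic Bockstein filtration and transport the relations. Most of the listed relations hold on the nose, being inherited from $E_1$ or forced by the definitions of the classes; this covers $x^2-v_0^2v_1^4$, $b^2-a^2v_1^4$, $xb-av_0v_1^4$, $a^2-v_0^2\tau^4$, $ab-\tau^4v_0x$, $\rho^3x$, $\eta_0^3$, $\tau^4\eta^3$, and the honestly vanishing products. The substantive step is to replace the four associated-graded relations $v_0\eta_0=0$, $\eta\eta_0^2=0$, $a\eta=0$, and $\eta b=0$ by the hidden extensions already established: $v_0\eta_0=\rho\eta\eta_0$ and $\eta\eta_0^2=\rho x$ from Proposition~\ref{prop:HiddenExtensions}, together with $a\eta=\rho\eta_0^2$ and $\eta b=\rho^3v_1^4$ from the later pages. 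At this stage I would simply invoke the Massey-product and comparison arguments already carried out.

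The delicate point, and the one I expect to be the main obstacle, is confirming that this list of hidden extensions is complete, i.e.\ that no other associated-graded relation $fg=0$ secretly lifts to $fg=\rho^{k}\cdot(\text{higher filtration class})$. For each such relation I would run the same tridegree bookkeeping used for $E(n)$ in Lemmata~\ref{lem:k=i} and~\ref{lem:m=0}: compute the tridegree of the product, enumerate the surviving $E_\infty$-classes there, and verify that either there is a unique candidate, forcing the stated extension, or no nonzero surviving class, forcing a genuine zero. The comparison map $\Ext_{\A(1)^\R}(\M_2,\M_2)\to\Ext_{E(1)}(\M_2,\M_2)$ reducing modulo $\eta$ and sending $v_1^4$ to $v_1^4$ is the key external input pinning down $\eta b=\rho^3v_1^4$ rather than a spurious alternative. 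Ruling out corrections for every generating relation, rather than merely exhibiting the four that occur, is what makes this step work; once it is done the multiplicative structure closes up and the stated presentation follows by collecting the generators and relations.
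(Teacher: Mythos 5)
Your proposal is correct and takes essentially the same route as the paper: the theorem is assembled from the $\rho$-Bockstein $E_\infty$-page together with the hidden extensions $v_0\eta_0=\rho\eta\eta_0$ and $\eta\eta_0^2=\rho x$ of Proposition~\ref{prop:HiddenExtensions}, the shuffle argument giving $a\eta=\rho\eta_0^2$, and the $E(1)$-comparison pinning down $\eta b=\rho^3v_1^4$. The extra details you supply---the computation $\tau_0^4=0$ in $\A(1)$ showing $\tau^4$ is strictly invariant, the degree checks for $a$ and $x$ (which the paper instead gets from Proposition~\ref{prop:PermCycles} and Lemma~\ref{lem:BocksteinCycles}), and the explicit tridegree bookkeeping certifying that no further hidden extensions occur---only make explicit what the paper leaves implicit.
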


\section{Applications}
Hopkins has remarked that the motivic Adams spectral sequence converges to the completion with respect to $2$ and $\eta$. This has recently be verified by Hu-Kriz-Ormsby \cite{HuKrOr}, who show that with appropriate connectivity hypotheses, the $\eta$-completion is unnecessary. We will apply this and the previous computations to determine the $2$-completed motivic homotopy of $BPGL$ over $\R$ and that of a conjectural connective spectrum $ko^\R$. We begin with a general remark about comparison with the complex and classical cases. Working over $\Spec(\C)$, Shkembi has produced spectra $ko^\C$ and $ku^\C$ which have the desired homotopy \cite{Sh09}.

There is a complexification map from the real Adams spectral sequence to the complex Adams spectral sequence, and the map on the $E_2$-terms is reduction modulo $\rho$. For $BPGL$, the complex Adams spectral sequence collapses since, just as in the classical case, the $E_2$-term is concentrated in even topological degrees. For the conjectural spectrum $ko^\R$, comparison with the classical case also shows that the complex Adams spectral sequence collapses.

In both cases, the collapse of the complex Adams spectral sequence puts large restrictions on the possible Adams differentials and multiplicative extensions. By naturality of the Adams spectral sequence, we conclude that the targets of any Adams differential for the real spectral sequence must be divisible by $\rho$, as must the correction terms for any multiplicative extensions. In the $ko^\R$ case, this greatly simplifies computations.

\subsection{The Adams Spectral Sequence for $BPGL$}
Let $MGL$ be the Thom spectrum of the universal bundle over $BGL$, and let $BPGL$ be the standard summand of the $2$-localization. Hu-Kriz conjectured the cohomology of $BPGL$ \cite{HuKr01b}.
\begin{conj}
As an $\A$-comodule algebra,
\[
H_{\ast,\ast}(BPGL;\F_2)=\A\Box_{E(\infty)}\M_2.
\]
\end{conj}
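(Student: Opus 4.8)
The plan is to deduce the comodule-algebra structure of $H_{\ast,\ast}(BPGL)$ from that of the Thom spectrum $MGL$ together with the Quillen idempotent, in exact analogy with the classical identification of $H_\ast(BP)$ with the cotensor product $\A_\ast\Box_{E}\F_p$ over the exterior subalgebra on the Milnor primitives. (The motivic situation over $\R$ mirrors the odd-primary classical structure, since the weight grading keeps the generators $\xi_i$ and $\tau_i$ separate even at the prime $2$.) First I would establish the structural input: over $\Spec(\R)$ the Thom isomorphism identifies $H_{\ast,\ast}(MGL;\F_2)$ with a polynomial $\M_2$-algebra on Landweber--Novikov generators $b_1,b_2,\dots$, with $\A$-coaction transported from $H_{\ast,\ast}(BGL)$ by the Thom class. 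The key point is how the Milnor primitives $Q_i$ (dual to $\tau_i$) act on the $b_i$ through the universal formal group law, exhibiting the sub-comodule of $E(\infty)$-primitives as a polynomial $\M_2$-algebra.

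Second, I would invoke the motivic Quillen idempotent. Because $MGL$ carries the universal formal group law, the Cartier--Quillen construction produces an idempotent $e\colon MGL_{(2)}\to MGL_{(2)}$ whose image is $BPGL$ and whose complementary splitting of $MGL_{(2)}$ into suspensions of $BPGL$ is available motivically (by Hu--Kriz and related work). Applying $H_{\ast,\ast}(-)$ splits the comodule algebra $H_{\ast,\ast}(MGL_{(2)})$, and $H_{\ast,\ast}(BPGL)$ is the summand cut out by $e$. The content is then to match this summand with $\A\Box_{E(\infty)}\M_2$: the idempotent projects onto the part on which the $Q_i$ act trivially, i.e.\ the $E(\infty)$-comodule primitives, which is precisely the cotensor product. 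A Hazewinkel- or Araki-type choice of generators makes the underlying algebra isomorphism manifest, leaving only the coaction to verify.

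The main obstacle is controlling the genuinely real phenomena encoded in the Hopf algebroid: the non-central ground ring and the twisted relation $\tau_i^2-\rho\tau_{i+1}-\rho\tau_0\xi_{i+1}-\tau\xi_{i+1}$, together with the right unit $\eta_R(\tau)=\tau+\rho\tau_0$. These force one to check that the coaction on the putative generators of $H_{\ast,\ast}(BPGL)$ genuinely lands in $\A\Box_{E(\infty)}\M_2$ with \emph{no} $\rho$-divisible correction terms. This is the same subtlety that the $\rho$-Bockstein machinery of this paper is designed to detect, and I expect the hard step to be ruling out a nontrivial $\rho$-deformation of the comodule structure rather than any difficulty in the algebra-level identification.

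To organize the argument I would reduce modulo $\rho$ first. Reducing the conjectured answer modulo the invariant ideal $(\rho)$ yields $\A^\C\Box_{E(\infty)^\C}\M_2^\C$, the complex motivic analogue; here $E(\infty)^\C$ is exterior on the $\tau_i$, and this cotensor product is the base change to $\M_2^\C$ of the classical computation of $H_\ast(BP)$, so it is known. Both $H_{\ast,\ast}(BPGL)$ and $\A\Box_{E(\infty)}\M_2$ are $\rho$-torsion-free, polynomial-type objects that agree after this reduction, so a comodule-level $\rho$-Bockstein comparison — the homology analogue of the $\Ext$ spectral sequence of this paper — would upgrade the mod-$\rho$ isomorphism to the asserted isomorphism of $\A$-comodule algebras over $\R$. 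The inputs needed are that the polynomial generators $\xi_i$ carry their classical coaction with no $\rho$-correction (the same mechanism that makes the corresponding $\Ext$ classes permanent cycles in Proposition~\ref{prop:PermCycles}) and that no hidden $\rho$-extension deforms the multiplication.
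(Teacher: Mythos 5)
The statement you are trying to prove is not proved in the paper at all: it is stated as a \emph{conjecture}, attributed to Hu--Kriz, and the paper only ever uses it as a hypothesis (the subsequent corollary computing the Adams $E_2$-term for $BPGL$ is conditional on it). So there is no proof to match, and your proposal should be judged as an attempt to settle an open problem. As such, it has a genuine gap at its center. Your second step asserts that the Quillen idempotent ``projects onto the part on which the $Q_i$ act trivially, i.e.\ the $E(\infty)$-comodule primitives, which is precisely the cotensor product.'' That assertion \emph{is} the conjecture; nothing in your outline derives it. Classically the analogous identification is proved by an explicit comparison: one knows $H_\ast(MU;\F_p)$ with its coaction, computes the summand cut out by the idempotent, and matches Poincar\'e series and coaction against $\A_\ast\Box_{E}\F_p$. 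Over $\R$ the counting and coaction arguments are exactly what break, because of the twisted relation $\tau_i^2=\rho\tau_{i+1}+\rho\tau_0\xi_{i+1}+\tau\xi_{i+1}$ and the right unit $\eta_R(\tau)=\tau+\rho\tau_0$ --- note in particular that your parenthetical claim that ``the weight grading keeps the generators $\xi_i$ and $\tau_i$ separate even at the prime $2$'' is false over $\R$ (and even over $\C$, where $\tau_i^2=\tau\xi_{i+1}$): the mixing of the $\tau$'s and $\xi$'s is precisely the source of the difficulty you are trying to wave away.

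Your fourth step has a second gap: a $\rho$-Bockstein comparison can only upgrade a mod-$\rho$ isomorphism to an integral one if you first have a \emph{map} of $\A$-comodule algebras $H_{\ast,\ast}(BPGL)\to\A\Box_{E(\infty)}\M_2$ (or in the other direction) inducing it, and you never construct one. The natural candidate is the map to $\A$ induced by the orientation $BPGL\to H\F_2$, but showing that its image lands in the cotensor product --- i.e.\ that the coaction on the generators carries no $\rho$-divisible correction terms --- is again the open content of the conjecture, not a formal consequence of $\rho$-torsion-freeness plus agreement mod $\rho$. In short: your outline correctly identifies where the difficulty lives (ruling out $\rho$-deformations of the comodule structure), but at both places where that difficulty must actually be confronted, the argument assumes the conclusion.
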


A standard change-of-rings argument then shows us that the $E_2$-term of the Adams spectral sequence is a ring we computed in \S\ref{sec:ExtE(n)}.

\begin{cor}
As an algebra,
\[
E_2=\Ext_{\A}\big(\M_2,H_{\ast,\ast}(BPGL)\big)=\Ext_{E(\infty)}\big(\M_2,\M_2).
\]
\end{cor}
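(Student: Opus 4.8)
The plan is to obtain this formally from the Hu-Kriz conjecture together with a change-of-rings isomorphism, the same mechanism already exploited in \S\ref{sec:ExtE(n)} and in the $\A(1)\to E(1)$ comparison. The first equality is simply the identification of the $E_2$-term of the motivic Adams spectral sequence. For the second, I would feed the conjectural comodule structure $H_{\ast,\ast}(BPGL;\F_2)=\A\Box_{E(\infty)}\M_2$ into the problem, so that the task reduces to producing a natural isomorphism
\[
\Ext_{\A}\big(\M_2,\A\Box_{E(\infty)}\M_2\big)\cong\Ext_{E(\infty)}(\M_2,\M_2).
\]

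The isomorphism is an instance of change-of-rings for the quotient Hopf algebroid $\A\to E(\infty)$. Restriction of comodules along this quotient is an exact functor from $\A$-comodules to $E(\infty)$-comodules, and the cotensor functor $\A\Box_{E(\infty)}(-)$ is its right adjoint. The adjunction gives, for every $E(\infty)$-comodule $M$,
\[
\mathrm{Hom}_{\A}\big(\M_2,\A\Box_{E(\infty)}M\big)\cong\mathrm{Hom}_{E(\infty)}(\M_2,M),
\]
since the restriction of $\M_2$ is again $\M_2$. Because the left adjoint is exact, the cotensor functor sends injective $E(\infty)$-comodules to injective $\A$-comodules; applying it to an injective resolution of $M$ therefore computes the right derived functors, and the adjunction isomorphism passes to
\[
\Ext_{\A}\big(\M_2,\A\Box_{E(\infty)}M\big)\cong\Ext_{E(\infty)}(\M_2,M).
\]
Taking $M=\M_2$ yields the corollary, with the explicit ring structure supplied by Theorem~\ref{thm:ExtE(n)} at $n=\infty$.

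The step requiring genuine care, rather than a verbatim appeal to the classical Cartan-Eilenberg theorem, is that $\A$ is a Hopf algebroid over the noncentral base $\M_2$, so I would verify the hypotheses of the Hopf-algebroid form of change-of-rings as in Appendix~1 of \cite{RaGB}. Concretely, this amounts to checking that $\A$ is flat over $\M_2$---immediate, since the presentation in Section~2 exhibits it as free on the monomials in the $\xi_i$ and $\tau_i$---and that $\A$ is coflat, indeed cofree, as an $E(\infty)$-comodule, so that the cotensor functor $\A\Box_{E(\infty)}(-)$ is exact. This coflatness, which is exactly what makes the cotensor functor behave homologically like its module-theoretic analogue, is the main obstacle; I expect it to follow directly from the normality of the quotient $\A\to E(\infty)$ built into the Definition, just as in the classical and complex cases. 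Once exactness and preservation of injectives are in hand, the remainder of the argument is purely formal.
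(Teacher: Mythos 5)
Your proposal is correct and takes essentially the same route as the paper: the paper's entire justification is the sentence ``A standard change-of-rings argument then shows us that the $E_2$-term of the Adams spectral sequence is a ring we computed in \S\ref{sec:ExtE(n)},'' and your write-up is exactly that argument with the details filled in (the corestriction/cotensor adjunction, exactness of $\A\Box_{E(\infty)}(-)$ via coflatness, and preservation of injectives). The points you flag as needing care---the Hopf-algebroid rather than Hopf-algebra setting, and cofreeness of $\A$ over $E(\infty)$---are precisely what the word ``standard'' is suppressing, so there is no discrepancy with the paper.
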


A combinatorial analysis of the tridegrees shows the following theorem.

\begin{thm}
The Adams spectral sequence for $BPGL$ collapses at $E_2$.
\end{thm}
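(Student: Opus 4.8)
The plan is to exploit the multiplicative structure together with the two constraints already in hand: every differential preserves the motivic weight $u$, lowers the stem $t-s$ by one, and raises the cohomological degree by $r$; and, as noted above, the collapse of the complex Adams spectral sequence together with naturality forces the target of every real differential to be divisible by $\rho$. Classically the analogous statement is immediate because $\Ext_{E(\infty)}(\M_2,\M_2)$ is concentrated in even stems while differentials change the stem parity; the only algebra generator that spoils even concentration here is $\rho$, which has odd stem $1$, so the whole difficulty is localized in the $\rho$-divisible classes.

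Since $E_2=\Ext_{E(\infty)}(\M_2,\M_2)$ is generated as an algebra by $\rho$ and the classes $v_i(j)$ (Theorem~\ref{thm:ExtE(n)}), and $\rho$ is a permanent cycle, the Leibniz rule reduces the collapse to showing that each $v_i(j)$ is a permanent cycle; I would prove this by induction on $i$. The elementary input is a stem inequality: the class $v_i(j)$ has stem $2^{i+1}-2$, so any differential on it lands in stem $2^{i+1}-3$. Because $\rho$ and every $v_k(j')$ contribute a \emph{non-negative} amount to the stem, and $v_k$ alone contributes $2^{k+1}-2$, a monomial of stem $2^{i+1}-3$ cannot contain any factor of index $k\ge i$ (such a factor already overshoots). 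Hence every potential target of a differential on $v_i(j)$ is a product of $\rho$ with generators of strictly smaller index, all permanent cycles by the inductive hypothesis; the base case $i=0$ is vacuous, since the target stem $-1$ is empty.

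The main obstacle is that this does \emph{not} close the argument by degrees alone: the candidate targets, although assembled from lower permanent cycles, are genuinely nonzero and $\rho$-divisible, so neither the $\rho$-divisibility constraint nor a tridegree count excludes them. For instance, there is a combinatorially permitted $d_3(v_3)=\rho\,v_1^3 v_2$, where $|v_3|=(14,1,7)$ and $|\rho\,v_1^3 v_2|=(13,4,7)$ agree in all three gradings, the target is $\rho$-divisible, and $\rho\,v_1^3 v_2\neq 0$. Ruling such differentials out amounts to showing the candidate target cannot be a boundary, i.e.\ that it survives to $E_\infty$, and this is where I expect the real work to lie. I would resolve it by comparison rather than by counting dimensions of $E_2$ alone: the $v_i$ lift to the geometric generators of $\pi_{*,*}(BPGL)$ coming from $MGL$, and $\rho$ comes from the sphere, so a product such as $\rho\,v_1^3 v_2$ is detected by a nonzero homotopy element and hence is a nonzero permanent cycle on $E_\infty$; being a nonzero permanent cycle, it cannot be hit, which forces the differential on the higher generator to vanish. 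Feeding this back into the induction on $i$ yields the stated collapse.
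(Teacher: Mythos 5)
Your argument breaks at the very first computational step: the tridegree of $\rho$. In the Adams indexing $(t-s,s,u)$ used throughout the paper, $\rho$ has tridegree $(-1,0,-1)$, not $(+1,0,+1)$: it lies in $H^{1,1}=H_{-1,-1}$ and has cohomological degree $0$. This sign is forced by the paper's own formulas: the Bockstein differential $d_1(\tau)=v_0\rho$ preserves $t$ and $u$, and the Corollary in \S\ref{sec:ExtE(n)} assigns $v_i(j)=\tau^{2^{i+1}j}v_i$ the weight $2^i-1-2^{i+1}j$, so $\tau$ and $\rho$ each contribute $-1$ to the weight and $\rho$ contributes $-1$ to the stem. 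This mistake has two consequences. First, your ``stem inequality'' fails: since $\rho$ has stem $-1$, a monomial in stem $2^{i+1}-3$ can certainly contain factors of index $\geq i$ (for example $\rho\, v_i(j')v_0(j'')$); it is the weight, not the stem, that excludes such terms, and your base-case claim that stem $-1$ is empty is false as well, since $\rho$ itself lives there. Second, and more importantly, your alleged counterexample to a pure degree argument is spurious: with the correct grading, $\left|\rho\, v_1^3v_2\right|=(11,4,5)$, not $(13,4,7)$, so it is not a permitted target of a $d_3$ on $v_3$. In fact the tridegree $(13,4,7)$ contains no nonzero class at all: writing a candidate as $\rho^m\prod_{l=1}^{4}v_{i_l}(j_l)$ with $A=\sum_l 2^{i_l}$ and $B=\sum_l 2^{i_l+1}j_l\geq 0$, the conditions $t=17$ and $u=7$ give $2A=21+m$ and $A=11+m+B$, forcing $m=-1-2B<0$. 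So the conclusion you drew --- that tridegree considerations cannot close the argument --- is exactly backwards: the paper's proof \emph{is} a combinatorial analysis of the tridegrees, and your example sits in one of the tridegrees that analysis shows to be empty.

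The patch you propose in the final paragraph is also not sound as stated. To say that $\rho v_1^3 v_2$ ``is detected by a nonzero homotopy element'' of $\pi_{\ast,\ast}(BPGL)$ presupposes that this product of homotopy classes is nonzero and has Adams filtration exactly $4$; that is knowledge of the ring $\pi_{\ast,\ast}(BPGL)$, which is precisely what the spectral sequence is supposed to compute, and nonvanishing of a product does not follow from nonvanishing of its factors. The paper does use a comparison of this flavor --- the Hu--Kriz realification $t(BPGL)=BP\R$, whose homotopy is known --- but only \emph{after} the collapse, to rule out hidden multiplicative extensions; if you wanted detection to force the collapse you would have to route it through $\pi_{\ast,\ast}BP\R$ and verify the relevant classes and their filtrations there, none of which your sketch does. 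The repair is to discard both the stem inequality and the detection step, correct the grading of $\rho$, and carry out the tridegree count, which is genuinely sufficient.
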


Just as classically, the element $v_0(0)$ detects multiplication by $2$. Over $\R$, this is not a trivial statement. In $\pi_{0,0}S^{0,0}$, there are homotopy classes represented by $v_0(0)$ and $\rho\eta_0$, and the element $2$ is represented by the sum of these classes. Since $\rho\eta_0=0$ here, $v_0(0)$ multiplication detects multiplication by $2$. If we can resolve the possibility of extensions, then we have computed the homotopy of $BPGL$.

\begin{thm}
There are no hidden multiplicative extensions.
\end{thm}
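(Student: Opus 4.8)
The plan is to upgrade the additive identification $E_\infty=\Ext_{E(\infty)}(\M_2,\M_2)$ furnished by the collapse of the Adams spectral sequence to a ring isomorphism with $\pi_{*,*}(BPGL)$. After choosing homotopy lifts of the generators $\rho$ and $v_i(j)$, it suffices to show that the two families of defining relations of $\Ext_{E(\infty)}(\M_2,\M_2)$ — the $\rho$-torsion relations $\rho^{2^{i+1}-1}v_i(j)=0$ and the multiplicative relations $v_i(j)v_k(\ell)=v_i(j+2^{k-i}\ell)v_k(0)$ — hold on the nose in homotopy, with no correction term in higher Adams filtration. A hidden multiplicative extension is exactly such a nonzero correction.

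The key naturality input is the one isolated at the start of this section. Complexification carries the real Adams spectral sequence for $BPGL$ to the complex one for $BPGL^\C$, and on $E_2$-terms it is reduction modulo $\rho$; since the complex spectral sequence collapses to the polynomial ring $\M_2^\C[v_0,v_1,\dots]$, which has no hidden extensions, any correction term in the real case must vanish modulo $\rho$, that is, be divisible by $\rho$ on $E_\infty$. It therefore suffices to rule out nonzero $\rho$-divisible classes in the tridegrees and filtrations where corrections could appear.

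For the $\rho$-torsion relations this is clean and uniform in the filtration. A correction to $\rho^{2^{i+1}-1}v_i(j)=0$ would be a nonzero $\rho$-divisible class of cohomological degree $N\geq 2$ sharing the topological degree $t-s=-1$ and weight $-2^i(1+2j)$ of $\rho^{2^{i+1}-1}v_i(j)$ (with $|\rho|=(-1,0,-1)$, forced by the differential $d_{2^{i+1}-1}(\tau^{2^i})=\rho^{2^{i+1}-1}v_i$). Writing such a class as $\rho^m v_{a_1}(b_1)\cdots v_{a_N}(b_N)$, matching the $(t-s)$-coordinate forces $m=\sum_r 2^{a_r+1}-2N+1$, while nonvanishing requires $m\leq 2^{a_{\min}+1}-2$ with $a_{\min}=\min_r a_r$, since the product is annihilated by $\rho^{2^{a_{\min}+1}-1}$. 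Combining these gives $\sum_{r\neq\min}2^{a_r+1}\leq 2N-3$, which is impossible because the $N-1$ terms on the left are each at least $2$. Hence the $\rho$-torsion relations lift without correction at every filtration.

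The multiplicative relations are where I expect the real work. Here the two sides already agree in $E_\infty$, so a failure would be a nonzero $\rho$-divisible difference, of cohomological degree $N\geq 3$, lying in the tridegree of $v_i(j)v_k(\ell)$. The $(t-s)$-count alone no longer closes the argument — it only bounds $\sum_{r\neq\min}2^{a_r+1}$ from above by a quantity that $N-1$ factors can meet — so one must also bring in the weight coordinate and rerun the elementary $2$-adic bookkeeping of Lemmas~\ref{lem:k=i} and~\ref{lem:m=0}, now one filtration higher. Verifying that no $\rho$-divisible monomial of cohomological degree $\geq 3$ simultaneously matches the topological degree and weight of $v_i(j)v_k(\ell)$ is the crux; granting it, every defining relation of $\Ext_{E(\infty)}(\M_2,\M_2)$ lifts, and the collapsing Adams spectral sequence yields $\pi_{*,*}(BPGL)\cong\Ext_{E(\infty)}(\M_2,\M_2)$ as rings.
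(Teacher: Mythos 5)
Your reduction via complexification and your treatment of the $\rho$-torsion relations are both sound: the kernel of reduction modulo $\rho$ on $E_\infty$ is exactly the ideal $(\rho)$, so any correction term must be detected by a $\rho$-divisible class, and your count ($\sum_{r\neq\min}2^{a_r+1}\leq 2N-3$ against $N-1$ terms each at least $2$) does settle the relations $\rho^{2^{i+1}-1}v_i(j)=0$. But the step you defer --- that no $\rho$-divisible monomial of cohomological degree at least $3$ matches the topological degree and weight of $v_i(j)v_k(\ell)$ --- is not a verification you can postpone: it is false. Take $i=0$, $j=1$, $k=3$, $\ell=1$, so the relation to lift is $v_0(1)v_3(1)=v_0(9)v_3(0)$, sitting in $(t-s,u)=(14,-11)$ and filtration $2$. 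The monomial $\rho^4\,v_2(0)\,v_2(1)^2$ has $t-s=-4+6+6+6=14$, weight $-4+3-5-5=-11$, filtration $3$, is visibly $\rho$-divisible, and is nonzero on $E_\infty$ because $4<2^{3}-1$. So the $2$-adic bookkeeping of Lemmata~\ref{lem:k=i} and~\ref{lem:m=0} does not extend one filtration higher: the uniqueness powering those lemmata is special to cohomological degree $2$, essentially because the weight can always be adjusted using the free parameters $b_r$ in $v_{a_r}(b_r)$. (This particular candidate can be excluded by a further homotopy-level argument --- multiply the would-be correction by $\rho$, use the already-exact relations $\rho\, v_0(\cdot)=0$ to see it is $\rho$-torsion, and note $\rho^5v_2(0)v_2(1)^2\neq 0$ --- but that is an additional idea your proposal does not contain, and it would then have to be carried through all tridegrees.)

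The paper avoids this entirely by comparing with a known complete computation rather than counting degrees. It uses the Hu--Kriz realification functor $t$ to genuine $\Z/2$-equivariant homotopy: $t(BPGL)=BP\R$, whose $RO(\Z/2)$-graded homotopy ring was computed by Hu--Kriz and has exactly the presentation of Theorem~\ref{thm:ExtE(n)}, with $a$ playing the role of $\rho$. Since realification sends $\rho\mapsto a$ and $v_i(j)\mapsto v_i(j)$, it embeds $\pi_{\ast,\ast}BPGL$ into a ring in which all the relations already hold on the nose, and no filtration analysis is needed. To repair your argument you would either need to systematically combine the degree count with homotopy-level torsion input as sketched above, or replace the counting by a comparison of this kind.
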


We argue this using the realification functor $t$ from the motivic stable homotopy category over $\R$, $\mathcal S_\R$, to the $\Z/2$-equivariant stable homotopy category, $\mathcal S^{\Z/2}$, described by Hu-Kriz \cite{HuKr01, HuKr01b}. This functor induces a map of bigraded homotopy rings
\[
\pi_{\ast,\ast}(X)\to\pi_{\ast,\ast}(t(X)).
\]
In the target, the bigraded homotopy groups are actually graded by the real representation ring of $\Z/2$. Hu and Kriz showed that $t(BPGL)=BP\R$, the summand of $2$-localization of the cobordism spectrum representing Real manifolds. They also computed the $RO(\Z/2)$-graded homotopy groups of $BP\R$. This ring is generated by classes $v_i(j)$ for $0\leq i$ and for $j\in\Z$ and a class $a$, subject to the relations identical to those in \S\ref{sec:ExtE(n)}:
\[
v_i(j)\cdot v_k(\ell)=v_i(j+2^{k-i}\ell)\cdot v_k(0)
\]
when $i\leq k$, and $a^{2^{i+1}-1}v_i(j)=0$.

The realification functor takes $\rho$ to $a$ and takes $v_i(j)$ to $v_i(j)$. We therefore conclude that this embeds $\pi_{\ast,\ast}BPGL$ into $\pi_{\ast,\ast}BP\R$. In particular, we also conclude that there are no multiplicative extensions and we have computed the homotopy of $BPGL$.

\subsection{The Adams Spectral Sequence for $ko^\R$}
In this section, we assume the existence of a motivic spectrum $ko^\R$ which has
\[
H_{\ast,\ast}(ko^\R)=\A\Box_{\A(1)}\M_2.
\]
The aforementioned change-of-rings argument tells us that we computed the Adams $E_2$-term for $ko^\R$ in \S\ref{sec:ExtA(1)}.

\begin{cor}
As an algebra,
\[
E_2=\Ext_{\A}\big(\M_2,H_{\ast,\ast}(ko^{\R})\big)=\Ext_{\A(1)}(\M_2,\M_2).
\]
\end{cor}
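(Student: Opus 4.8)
The plan is to deduce the corollary from the standard change-of-rings isomorphism for comodules over a Hopf algebroid, running exactly parallel to the $BPGL$ case above but with $E(\infty)$ replaced by $\A(1)$. The first equality is simply the definition of the Adams $E_2$-term together with the standing assumption that $H_{\ast,\ast}(ko^\R)=\A\Box_{\A(1)}\M_2$; the real content is the second equality, which I would obtain by recognizing $\A\Box_{\A(1)}\M_2$ as the comodule coinduced up to $\A$ from the trivial $\A(1)$-comodule $\M_2$ along the quotient map $\A\to\A(1)$.

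First I would record that $\A\to\A(1)$ is a genuine surjection of Hopf algebroids over $\M_2$: this is built into the definition of $\A(1)$, which is the quotient of $\A$ by the displayed invariant ideal, so the projection is compatible with both units, the coproduct, and the antipode. The functor $\A\Box_{\A(1)}(-)$ is then the coinduction functor, right adjoint to restriction of comodules along this map. I would next invoke the Hopf-algebroid form of the change-of-rings theorem \cite{RaGB} to obtain
\[
\Ext_{\A}\big(\M_2,\A\Box_{\A(1)}\M_2\big)\cong\Ext_{\A(1)}(\M_2,\M_2),
\]
after checking its hypotheses: $\A$ is flat over $\M_2$ (in fact free, being a polynomial algebra), and the quotient $\A\to\A(1)$, arising by reduction modulo an invariant ideal, is normal, so that $\A$ is cofaithfully flat as an $\A(1)$-comodule and $\A\Box_{\A(1)}(-)$ is exact and preserves injectives. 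The restriction/coinduction adjunction then passes to the derived functors of $\mathrm{Hom}_{\A}(\M_2,-)$ and yields the displayed isomorphism.

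The main obstacle, and the only place where motivic subtleties might intervene, is this normality and flatness check: because $\M_2$ is not central in $\A$ — indeed $\eta_R(\tau)=\tau+\tau_0\rho\neq\eta_L(\tau)$ — the object $\A$ is a true Hopf algebroid rather than a Hopf algebra, so one cannot simply quote the classical sub-Hopf-algebra change-of-rings theorem and must instead confirm that cofaithful flatness and normality persist in the algebroid setting. Since the identical verification already underlies the $BPGL$ corollary for $E(\infty)$, and since $\A(1)$ is by construction a quotient by an invariant ideal, this step is routine. Combining the isomorphism with the computation of $\Ext_{\A(1)}(\M_2,\M_2)$ from \S\ref{sec:ExtA(1)} then completes the proof.
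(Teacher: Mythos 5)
Your proposal is correct and takes essentially the same route as the paper: the paper obtains this corollary by citing the same standard change-of-rings isomorphism for the quotient Hopf algebroid $\A\to\A(1)$, applied to the assumed identification $H_{\ast,\ast}(ko^\R)=\A\Box_{\A(1)}\M_2$, exactly as it did for $BPGL$ with $E(\infty)$. Your verification of the hypotheses (freeness of $\A$ over $\M_2$, the quotient by an invariant ideal, exactness of the coinduction functor and preservation of injectives) simply makes explicit what the paper dismisses as ``the aforementioned change-of-rings argument.''
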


To determine if there are Adams differentials, we again argue by sparceness.

\begin{thm}
The Adams spectral sequence for $\pi_{\ast,\ast}ko^\mathbb R$ collapses at $E_2$.
\end{thm}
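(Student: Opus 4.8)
The plan is to combine the general principle recorded above---that, since the complex Adams spectral sequence for $ko^\C$ collapses and complexification induces reduction modulo $\rho$ on $E_2$, the target of every real Adams differential is divisible by $\rho$---with the multiplicative structure of the spectral sequence and a tridegree count. Concretely, naturality forces $\rho$-divisibility of targets: if $d_r(y)=z$ then reducing modulo $\rho$ gives $d_r^\C(\bar y)=\bar z$, and the collapse of the complex spectral sequence yields $\bar z=0$, i.e.\ $z\in(\rho)$. Since the spectral sequence is multiplicative, it then suffices to show that each algebra generator of $E_2=\Ext_{\A(1)}(\M_2,\M_2)$ appearing in the theorem above is a permanent cycle, whence the Leibniz rule propagates the collapse to the whole page.

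First I would dispatch the generators that are permanent cycles for structural reasons. The classes $\rho$, $v_0$, and $\eta$ are permanent cycles because they detect the corresponding elements ($\rho$, $2$, and $\eta$) in the homotopy of the sphere. The classes $\eta_0$, $a$, and $x$ were produced in Section~\ref{sec:ExtA(1)} as Toda brackets of these, namely $\eta_0=\toda{\rho,v_0,\eta}$, $a=\toda{\eta_0\rho,\rho,v_0}$, and $x=\toda{v_0,\eta,\eta_0\eta}$; Moss's convergence theorem shows that such Massey products of permanent cycles are represented by permanent cycles, so these three survive as well. For $\tau^4$ and $v_1^4$ I would argue by sparseness directly: a differential on either would land in a tridegree of fixed weight ($-4$ and $4$ respectively), one lower topological degree, and strictly higher filtration, and I would verify that no nonzero $\rho$-divisible class occupies that spot.

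The sparseness bookkeeping is streamlined by the parity invariant $\nu(z)=(t-s)-u\bmod 2$. Inspection of the tridegrees shows that $\nu$ vanishes on every algebra generator except $\eta_0$, so $\nu$ counts the number of $\eta_0$-factors modulo $2$; since a differential lowers $t-s$ by one and preserves the weight $u$, it reverses $\nu$. This leaves the final generator, $b$, in tridegree $(4,3,0)$, as the crux. Its image under a $d_r$ would lie in topological degree $3$, weight $0$, filtration at least $5$, be $\rho$-divisible, and (by the parity invariant) contain an odd---hence, since $\eta_0^3=0$, exactly one---factor of $\eta_0$. I would then exhaust these possibilities using the relations $\eta_0 x=\eta_0 b=\eta_0\eta^2=\eta_0^3=0$ to conclude that every such product vanishes, so $d_r(b)=0$. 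The hard part is precisely this analysis around $b$ and the neighboring exotic classes, where the groups are densest: without the $\rho$-divisibility constraint coming from the complex collapse, sparseness alone would not clear the relevant tridegrees.
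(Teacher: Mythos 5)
Your overall skeleton---complex collapse forces $\rho$-divisible targets, multiplicativity reduces the question to algebra generators, then tridegree bookkeeping---is exactly the paper's (very terse) ``argue by sparseness,'' and your parity invariant $\nu=(t-s)-u \bmod 2$ together with the explicit checks for $\tau^4$, $v_1^4$, and $b$ is correct; the arithmetic there works out. The genuine gap is the step handling $\eta_0$, $a$, and $x$ via Moss's theorem. Moss's convergence theorem does \emph{not} say that a Massey product of permanent cycles is represented by a permanent cycle; it says that \emph{if} the corresponding Toda bracket is defined in homotopy---i.e.\ the pairwise products of the detected homotopy classes vanish in $\pi_{\ast,\ast}ko^\R$, not merely in $E_2$---then some element of the Massey product survives and detects it. Vanishing in $E_2$ does not give vanishing in homotopy, and here it demonstrably fails: by the Milnor--Witt relation $0=[1]=[(-1)(-1)]=2\rho+\rho^2\eta$ in $K^{MW}_1(\R)=\pi_{-1,-1}(S^{0,0})$, the product $2\rho$ equals $-\rho^2\eta$, whose image in $\pi_{-1,-1}(ko^\R)$ is detected by the nonzero $E_2$-class $\rho^2\eta$ and so cannot be assumed to vanish. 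Hence the bracket $\toda{\rho,2,\eta}$ you use for $\eta_0$ need not even be defined; the paper itself flags this subtlety when it notes that $2$ is represented by the \emph{sum} of the $v_0$-type and $\rho\eta$-type classes. (The bracket that is defined is $\toda{\rho,h,\eta}$ with $h=2+\rho\eta$ the hyperbolic class, since $h\rho=h\eta=0$ already in Milnor--Witt $K$-theory; with that substitution and a crossing-differentials check, the Moss argument could be salvaged.)

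This gap is load-bearing precisely at $\eta_0$, because $\eta_0$ is the one generator that sparseness cannot reach: the class $\rho^{1+r}\eta^{1+r}$ lies in the target tridegree $(0,1+r,0)$ of a potential $d_r(\eta_0)$, is $\rho$-divisible, is nonzero on $E_2$ (no relation in the presentation kills $\rho^p\eta^q$), and has the correct parity $\nu=0$, so it passes every filter you impose. (For $a$ and $x$, by contrast, your own tridegree count already clears the target degrees---one gets $4(m+q)=\pm 2$ or a filtration that is too low---so Moss was never needed there.) The cheapest correct argument for $\eta_0$ is the Leibniz rule against a relation: since $\eta^2\eta_0=0$ and $\eta$ is already known to be a $d_r$-cycle, we get $\eta^2\, d_r(\eta_0)=0$; but $\eta^2\cdot\rho^{1+r}\eta^{1+r}=\rho^{1+r}\eta^{3+r}\neq 0$ on $E_r=E_2$ (inductively assuming no earlier differentials), so $d_r(\eta_0)=0$. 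With that patch, your proof is complete and is in fact considerably more detailed than the paper's one-line appeal to sparseness---which, as the $\eta_0$ example shows, is not by itself sufficient without the multiplicative input.
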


This is turn should have applications to both Hermitian $K$-theory and the motivic image of $J$. If we invert the class $v_1^4$, the class corresponding to the Bott class, then we get a non-connective, periodic ring which should be the motivic homotopy ring of Hermitian $K$-theory. More geometric computations are needed here. This is analogous to what would happen with a connective complex $K$-theory spectrum $ku$.

%

Classically, the image of $J$ can be detected using real $K$-theory. We learn from this computation that if there is a spectrum $ko$, then the image of $J$ elements in $\pi_{\ast,\ast}S^0$ are all $\rho$-torsion free. Let $P(-)$ denote Adams' periodicity operator. Then $P^i(\eta^k)\in\pi_{8i+k,4i+k}S^0$ will map to the element $v_1^i\eta^k\in\pi_{8i+k,4i+k}ko$, and this element is $\rho$-torsion free. For the class $\eta$, this is not surprising. This map can be explicitly constructed, and since $\rho$ survives the Adams spectral sequence, we learn that $\rho^k\eta$ survives. For higher classes it is less trivial. In particular, we see that the classes $\rho^nP^k(\eta)$ survive for all $n$ and $k$.

\bibliography{bib}

\end{document}